\newtheorem{theo}{Theorem}[section]
\newtheorem{defi}[theo]{Definition}
\newtheorem{lem}[theo]{Lemma}
\newtheorem{prop}[theo]{Proposition}
\newtheorem{rem}[theo]{Remark}
\newenvironment{proof}{{\bf Proof.}}
\newcommand{\ggot}{\ensuremath{\mathfrak{g}}}
\newcommand{\hgot}{\ensuremath{\mathfrak{h}}}
\newcommand{\kgot}{\ensuremath{\mathfrak{k}}}
\newcommand{\tgot}{\ensuremath{\mathfrak{t}}}
\newcommand{\Acal}{\ensuremath{\mathcal{A}}}
\newcommand{\Bcal}{\ensuremath{\mathcal{B}}}
\newcommand{\Dcal}{\ensuremath{\mathcal{D}}}
\newcommand{\Lcal}{\ensuremath{\mathcal{L}}}
\newcommand{\Ocal}{\ensuremath{\mathcal{O}}}
\newcommand{\Pcal}{\ensuremath{\mathcal{P}}}
\newcommand{\Qcal}{\ensuremath{\mathcal{Q}}}
\newcommand{\Scal}{\ensuremath{\mathcal{S}}}
\newcommand{\Ucal}{\ensuremath{\mathcal{U}}}
\newcommand{\Xcal}{\ensuremath{\mathcal{X}}}
\newcommand{\Pbb}{\ensuremath{\mathbb{P}}}
\newcommand{\Z}{\ensuremath{\mathbb{Z}}}
\newcommand{\C}{\ensuremath{\mathbb{C}}}
\newcommand{\R}{\ensuremath{\mathbb{R}}}
\newcommand{\mm}{\ensuremath{\hbox{\rm m}}}
\newcommand{\croc}{\ensuremath{\hookrightarrow}}
\newcommand{\qfor}{\ensuremath{\mathcal{Q}^{-\infty}}}
\newcommand{\QS}{\ensuremath{\mathrm{Q}^{\mathrm{spin}}}}
\newcommand{\spinc}{\ensuremath{{\rm spin}^{c}}}
\def \T {{\rm T}}
\def \what {\widehat}
\def \clif {{\bf c}}
\begin{document}

\title{Formal Geometric Quantization III \\ Functoriality in the $\spinc$ setting}

\author{Paul-Emile PARADAN
\footnote{Institut Montpelli\'erain Alexander Grothendieck,
Universit\'e de Montpellier, CNRS \texttt{paul-emile.paradan@umontpellier.fr}}}

%%%%%%%%%%%%%%%%%%%%%%%%%%%%%%%

\maketitle

{\small
\tableofcontents}

%%%%%%%%%%%%%%%%%%%
\section{Introduction}
%%%%%%%%%%%%%%%%%%%

Let $(M,\Scal)$ be a $K$-manifold of even dimension, oriented, and equipped with a $K$-equivariant $\spinc$ bundle $\Scal$. 
The orientation induces a decomposition $\Scal=\Scal^+\oplus \Scal^-$, and the corresponding $\spinc$ Dirac operator
is a first order elliptic operator $\Dcal_\Scal: \Gamma(M,\Scal^+)\to \Gamma(M,\Scal^-)$ \cite{Atiyah-Singer-2,B-G-V,Duistermaat96}. 

When $M$ is compact, an important invariant is the equivariant index $\Qcal_K(M,\Scal)\in R(K)$ of the operator $\Dcal_\Scal$, that can be understood as the quantization of the data $(M,\Scal,K)$ \cite{Atiyah-Hirzebruch70,Hattori78}.

The determinant line bundle of the $\spinc$-bundle $\Scal$ is the line bundle $\det(\Scal)\to M$ defined by the relation
$$
\det(\Scal):=\hom_{\mathrm{Cl}}(\overline{\mathcal{S}},\mathcal{S})
$$
where $\overline{\mathcal{S}}$ is the Clifford module with opposite complex structure (see \cite{pep-vergne-acta}).

The choice of an invariant Hermitian connection $\nabla$ on $\det(\Scal)$ determines an equivariant map $\Phi_{\Scal}: M\to \kgot^{*}$ and a 2-form
$\Omega_\Scal$ on $M$ by means of the  Kostant relations
\begin{equation}\label{eq:kostant-L}
    \Lcal(X)-\nabla_{X_M}=2i \langle\Phi_\Scal,X\rangle\quad \mathrm{and} \quad \nabla^2= -2i \Omega_\Scal
\end{equation}
for every $X\in\kgot$. Here $\Lcal(X)$ denotes the infinitesimal action on the sections of $\det(\Scal)$.
We say that $\Phi_\Scal$ is the {\em moment map} for $\Scal$ (it depends however of the choice of a connection).

Assume now that $M$ is non-compact but that the moment map $\Phi_\Scal$ is {\em proper}. In this case the formal geometric 
quantization  of $(M,\Scal,K)$ is well-defined:
$$
\qfor_K(M,\Scal)\in \hat{R}(K)
$$
as an index localized on the zeros of the Kirwan vector field \cite{pep-ENS,pep-pacific,Ma-Zhang14,Hochs-Song-duke}. We will explain the construction in section \ref{sec:QR-non-compact}.

Consider now a closed connected subgroup $H\subset K$. Let ${\rm p}:\kgot^*\to \hgot^*$ be the canonical projection. The map 
${\rm p}\circ \Phi_\Scal$ corresponds to the moment map for $\Scal$ relative to the $H$-action.

The main result of our note is the following

\begin{theo}\label{theo-principal-introduction}
Suppose that ${\rm p}\circ \Phi_\Scal$ is proper. Then the following holds:
\begin{itemize}
\item The $K$-module $\qfor_K(M,\Scal)$ is $H$-admissible.
\item We have $\qfor_K(M,\Scal)\vert_H=\qfor_H(M,\Scal)$.
\end{itemize}
\end{theo}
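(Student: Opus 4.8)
The strategy is to deduce the statement from the $\spinc$ version of the ``quantization commutes with reduction'' theorem, valid both for compact manifolds \cite{pep-vergne-acta} and, when the moment map is proper, in the formal setting \cite{pep-ENS,pep-pacific,Ma-Zhang14,Hochs-Song-duke}. It identifies the multiplicity of the irreducible character $\chi^K_\mu$ in $\qfor_K(M,\Scal)$ with the integer $\Qrm\big((M,\Scal)/\!/_\mu K\big)$, the $\spinc$ quantization of the compact (possibly singular) reduced space $\Phi_\Scal^{-1}(K\mu)/K$ endowed with its induced $\spinc$ structure, and likewise for $\qfor_H(M,\Scal)$, the $H$-reductions being formed from ${\rm p}\circ\Phi_\Scal$. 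First I would record the elementary fact that properness of ${\rm p}\circ\Phi_\Scal$ forces $\Phi_\Scal$ to be proper as well — $\Phi_\Scal^{-1}(C)$ is closed in the compact set $({\rm p}\circ\Phi_\Scal)^{-1}({\rm p}(C))$ — so that $\qfor_K(M,\Scal)$ is defined, and that ${\rm p}$ restricts to a proper map on the closed $K$-invariant subset $\Phi_\Scal(M)\subset\kgot^*$.

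Next I would reinterpret the branching numbers $[\chi^K_\mu\vert_H:\chi^H_\nu]$ as quantizations of reduced spaces. For $\mu$ regular the coadjoint orbit $K\mu$, carrying its Kostant--Souriau $\spinc$ bundle, quantizes to $\chi^K_\mu$ (I suppress the usual $\rho$-shift), so applying $[Q,R]=0$ to the compact $H$-manifold $K\mu$, whose $H$-moment map is the restriction of ${\rm p}$, yields
\begin{equation*}
\chi^K_\mu\vert_H=\Qcal_H(K\mu)=\sum_{\nu}\Qrm\big((K\mu)/\!/_\nu H\big)\,\chi^H_\nu,
\end{equation*}
non-regular $\mu$ contributing nothing. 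With these two inputs, the theorem becomes the assertion that for every $\nu\in\widehat{H}$ the set of $\mu$ with $\Qrm\big((M,\Scal)/\!/_\mu K\big)\neq 0$ and $[\chi^K_\mu\vert_H:\chi^H_\nu]\neq 0$ is finite — this being exactly $H$-admissibility — together with the ``reduction in stages'' identity
\begin{equation*}
\Qrm\big((M,\Scal)/\!/_\nu H\big)=\sum_{\mu}\Qrm\big((M,\Scal)/\!/_\mu K\big)\cdot\Qrm\big((K\mu)/\!/_\nu H\big).
\end{equation*}
When $M$ is compact both facts are immediate: the sum is finite, and the identity is the equality of $H$-characters $\Qcal_K(M,\Scal)\vert_H=\Qcal_H(M,\Scal)$ read off through the two instances of $[Q,R]=0$ and the branching formula above.

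The heart of the matter is to establish this reduction in stages in the noncompact case. I would not attempt to identify the singular quotients geometrically — which fails on the nose — but rather lift everything to the ambient manifold by the shifting trick: $\Qrm\big((M,\Scal)/\!/_\nu H\big)$ is the multiplicity of the trivial $H$-representation in the formal quantization of the product of $M$ with a dual $H$-coadjoint orbit, and similarly on the $K$-side and for $K\mu$, all of these being defined by the transversally elliptic index construction of Section \ref{sec:QR-non-compact}. One then compares the $K$- and $H$-localizations on these products by relating the $K$-Kirwan vector field to the $H$-Kirwan vector field — the two are intertwined by ${\rm p}$, since the $H$-moment map is ${\rm p}\circ\Phi_\Scal$ — and by invoking the excision and multiplicativity of that index to match, for each $K$-critical component, its $H$-restriction with a finite sum of contributions of $H$-critical components. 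Properness of ${\rm p}\circ\Phi_\Scal$ — rather than of $\Phi_\Scal$ alone — enters precisely here: it guarantees that the part of the $H$-Kirwan critical set relevant to a fixed $\nu$ is compact and receives nonzero contributions from only finitely many $K$-critical components, indexed by the coadjoint orbits met by $\Phi_\Scal$, which simultaneously produces the finiteness above and matches the two sums.

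Granting all this, the conclusion is bookkeeping: for every $\nu\in\widehat{H}$,
\begin{align*}
\big[\,\qfor_K(M,\Scal)\vert_H:\chi^H_\nu\,\big]
&=\sum_{\mu}\Qrm\big((M,\Scal)/\!/_\mu K\big)\,[\chi^K_\mu\vert_H:\chi^H_\nu]\\
&=\sum_{\mu}\Qrm\big((M,\Scal)/\!/_\mu K\big)\,\Qrm\big((K\mu)/\!/_\nu H\big)\\
&=\Qrm\big((M,\Scal)/\!/_\nu H\big)=\big[\,\qfor_H(M,\Scal):\chi^H_\nu\,\big].
\end{align*}
I expect the genuine obstacle to be the noncompact reduction-in-stages step: making rigorous the local comparison of the $K$- and $H$-transversally elliptic indices near a critical component in the presence of singular reduced spaces, and controlling the interaction of the two Kirwan vector fields so that the $H$-localization genuinely decomposes according to the $K$-orbits in $\Phi_\Scal(M)$. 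The compact branching computation and the combinatorial assembly are routine by comparison.
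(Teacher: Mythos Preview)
Your argument for $H$-admissibility is essentially the paper's: both use the $\spinc$ $[Q,R]=0$ expansion $\qfor_K(M,\Scal)=\sum_\Pcal \QS(M_\Pcal)\QS_K(\Pcal)$ together with the branching formula for $\QS_K(\Pcal)\vert_H$, and observe that $\QS(M_\Pcal)\QS(\Pcal_{\Pcal'})\neq 0$ forces $\Pcal$ to lie in the compact set $K\Phi_\Scal\big(({\rm p}\circ\Phi_\Scal)^{-1}(\Pcal')\big)$. So the first bullet is fine.

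For the restriction identity, however, there is a genuine gap. You reduce everything to a noncompact ``reduction in stages'' identity and then propose to prove it by directly comparing the $K$- and $H$-Kirwan localizations on $M$ (or on $M$ times a dual orbit). But the two Kirwan vector fields $\kappa_K(m)=-\Phi_\Scal(m)\cdot m$ and $\kappa_H(m)=-{\rm p}(\Phi_\Scal(m))\cdot m$ are \emph{not} related by ${\rm p}$ in any usable sense --- they are tangent vectors, and their zero sets are in general unrelated strata of $M$. There is no known excision/deformation argument producing, for a fixed $K$-critical component $Z_\beta$, a decomposition of $\Qcal_K(M,\Scal,Z_\beta)\vert_H$ as a sum of $H$-localized indices $\Qcal_H(M,\Scal,Z_\gamma)$; nor does transversally elliptic index theory supply one. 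You yourself flag this step as the ``genuine obstacle'', and you do not provide the mechanism. As written, the proposal is a reformulation of the theorem rather than a proof of it.

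The paper avoids this obstacle entirely by an auxiliary construction you do not mention: it introduces the de Concini--Procesi compactification $\Xcal_P$ of $K_\C$, forms the $H\times K$-manifold $M\times\Xcal_P$ with $\spinc$-bundle $\Scal\boxtimes\Scal_P^n$, and studies $E(n)=\big[\qfor_{H\times K}(M\times\Xcal_P,\Scal\boxtimes\Scal_P^n)\big]^K$. On one hand, Hochs--Song multiplicativity and the explicit decomposition of $\Qcal_{K\times K}(\Xcal_P,\Scal_P^n)$ give $\lim_{n\to\infty}E(n)=\qfor_K(M,\Scal)\vert_H$. On the other hand, for $n$ large the relevant critical set is forced into the open piece $\Xcal_P^\circ\simeq K\times\Ucal_P$, where a \emph{genuine} reduction-in-stages (free $K$-action, smooth quotient) identifies the $\nu$-multiplicity of $E(n)$ with $[\pi^H_\nu:\qfor_H(M,\Scal)]$. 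A preliminary lemma reduces to the case of abelian generic stabilizers, so that the localized index at $\{\Phi=0\}$ already computes the invariant part. The compactification thus serves as an ``approximate identity'' that converts the intractable direct comparison you propose into a smooth reduction-in-stages on a concrete model.
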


We obtained a similar result in the symplectic setting in \cite{pep-IF}. Here our method uses the compactifications of reductive groups \`a la 
de Concini-Procesi and the multiplicative property of the functor $\qfor_K$ that has been proved recently by 
Hochs and  Song \cite{Hochs-Song-duke}.

%%%%%%%%%%%%%%%%%%%%%%%%%%%%%%%%%%%%%%%%%%%%%%%%%%%%%%
%%%%%%%%%%%%%%%%%%%%%%%%%%%%%%%%%%%%%%%%%%%%%%%%%%%%%%
\begin{center}
\bf Notations
\end{center}
%%%%%%%%%%%%%%%%%%%%%%%%%%%%%%%%%%%%%%%%%%%%%%%%%%%%%%
%%%%%%%%%%%%%%%%%%%%%%%%%%%%%%%%%%%%%%%%%%%%%%%%%%%%%%

Throughout the paper :
\begin{itemize}
\item $K$ denotes a compact connected Lie group with Lie algebra $\kgot$.
\item We denote by $R(K)$ the representation ring of $K$ : an element $E\in R(K)$ can be represented
as a finite sum $E=\sum_{\mu\in\what{K}}\mm_\mu \pi_\mu$, with $\mm_\mu\in\Z$. The multiplicity
of the trivial representation is denoted $[E]^K$.

\item We denote by $\hat R(K)$ the space of $\Z$-valued functions on $\widehat{K}$. An element $E\in
    \hat R(K)$ can be represented
as an infinite sum $E=\sum_{\mu\in\what{K}}\mm(\mu) \pi_\mu$, with $\mm(\mu)\in\Z$. 

\item An element $\xi\in \kgot^*$  is called regular if the stabilizer subgroup $K_\xi:=\{k\in K, k\cdot\xi=\xi\}$ is a maximal torus of $K$.

\item When $K$ acts on a manifold $M$, we denote $X_M(m):=\frac{d}{dt}\vert_{t=0} e^{-tX}\cdot m$
the vector field generated by $-X\in \kgot$. Sometimes we will also use the notation $X_M(m)=-X\cdot m$. 
The set of zeroes of the vector field $X_M$ is denoted $M^X$.

\end{itemize}

%%%%%%%%%%%%%%%%%%%%%%%%%%%%%%
\section{The $[Q,R]=0$ theorem in the $\spinc$ setting}\label{sec:QR-compact}
%%%%%%%%%%%%%%%%%%%%%%%%%%%%%%

In this section we suppose that $M$ is compact and we recall the results of \cite{pep-vergne-acta} concerning the multiplicities of $\Qcal_K(M,\Scal)\in R(K)$.

For any $\xi\in\kgot^*$, we denote $\kgot_\xi$ the Lie algebra of the stabilizer subgroup $K_\xi$. A coadjoint 
orbit $\Pcal=K\eta$ is of type $(\kgot_\xi)$ if the conjugacy classes $(\kgot_\eta)$ and $(\kgot_\xi)$ are equal.

\begin{defi}
Let $(\kgot_M)$ be the generic infinitesimal stabilizer for the $K$-action on $M$. We says that the $K$-action on $M$ is {\em nice} if there exists $\xi\in\kgot^*$ such that 
\begin{equation}\label{eq:nice}
([\kgot_M,\kgot_M])=([\kgot_\xi,\kgot_\xi]).
\end{equation} 
\end{defi}

The first result of \cite{pep-vergne-acta} is the following

\begin{theo}
If the $K$-action on $M$ is not nice, then $\Qcal_K(M,\Scal)=0$ for any $\spinc$ bundle $\Scal$.
\end{theo}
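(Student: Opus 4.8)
The plan is to show that if $\Qcal_K(M,\Scal)\neq 0$ for some $\spinc$ bundle, then the $K$-action on $M$ is necessarily nice, i.e.\ the generic infinitesimal stabilizer $\kgot_M$ satisfies $([\kgot_M,\kgot_M])=([\kgot_\xi,\kgot_\xi])$ for some $\xi\in\kgot^*$. The key geometric input is the Kostant relation \eqref{eq:kostant-L} together with the localization of the index $\Qcal_K(M,\Scal)$ on the critical set of the square of the Kirwan vector field associated to the moment map $\Phi_\Scal$. I would first recall that the nonvanishing of $\Qcal_K(M,\Scal)$ forces, via localization, the existence of a component of the critical set that contributes, and in particular forces the image $\Phi_\Scal(M)$ to meet a coadjoint orbit whose stabilizer type is controlled by the geometry of $M$ near that component.

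The first step is to examine how $\kgot_M$ enters the picture. At a generic point $m\in M$ the stabilizer $\kgot_m$ is conjugate to $\kgot_M$, and differentiating the first Kostant relation along the (trivial) action of $\kgot_m$ on $m$ shows that $\langle\Phi_\Scal(m),X\rangle$ is constrained for $X\in\kgot_m$: more precisely the infinitesimal action $\Lcal(X)$ on $\det(\Scal)_m$ is by a scalar, so $\Phi_\Scal(m)$ restricted to $\kgot_m$ is determined by the weight of $\det(\Scal)$ at $m$. Consequently $\kgot_m\subset\kgot_{\Phi_\Scal(m)}$, hence the generic stabilizer satisfies $\kgot_M\subset\kgot_{\xi}$ for $\xi$ a generic value of $\Phi_\Scal$. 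This alone is not enough; the point is to get the reverse containment up to conjugacy on the level of derived subalgebras.

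The heart of the argument, and the main obstacle, is to produce the opposite inequality $([\kgot_\xi,\kgot_\xi])\subseteq([\kgot_M,\kgot_M])$ — or rather to rule out the strictly larger case when the action is not nice. Here I would argue by contradiction using the structure of the Kirwan vector field $\kappa_\Scal$, whose zero set is $\{m: \Phi_\Scal(m)\cdot m = 0\}$, i.e.\ points where $\Phi_\Scal(m)$ lies in $\kgot_m^*$ (identified appropriately). The index localizes as a sum of contributions $\Qcal_K(M,\Scal)=\sum_{\beta}\Qcal_K(M,\Scal)_\beta$ indexed by the relevant $\beta\in\Phi_\Scal(M^{\kappa_\Scal})$. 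One shows that each such $\beta$ lies in the closure of a face whose stabilizer $\kgot_\beta$ contains $\kgot_M$, and that a nonzero contribution forces a transversality/dimension condition on the normal bundle of $M^\beta$ which, combined with the semisimple part $[\kgot_\beta,\kgot_\beta]$ acting trivially on the generic slice, forces $[\kgot_\beta,\kgot_\beta]$ to be conjugate into $[\kgot_M,\kgot_M]$. The delicate part is the bookkeeping with the $\spinc$ twist: unlike the symplectic case the bundle $\Scal$ need not be positive, so one must track the contribution of $\det(\Scal)$ carefully and use that a nonzero equivariant index of the transverse Dirac operator along $M^\beta$ requires the relevant isotropy representation to be nondegenerate in the sense of \cite{pep-vergne-acta}. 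Assembling these local constraints at a component contributing nontrivially, and using that $\xi$ may be taken to be precisely such a $\beta$, yields $([\kgot_M,\kgot_M])=([\kgot_\xi,\kgot_\xi])$, contradicting the non-niceness hypothesis; hence $\Qcal_K(M,\Scal)=0$.
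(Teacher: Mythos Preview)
The paper does not give its own proof of this theorem: it is quoted verbatim as ``the first result of \cite{pep-vergne-acta}''. So there is no in-paper argument to compare against; what the paper does supply, in Section~\ref{sec:QR-non-compact}, is a summary of the mechanism from \cite{pep-vergne-acta}: one applies the shifting trick, so that the multiplicity of $\pi^K_\Ocal$ in $\Qcal_K(M,\Scal)$ is $\bigl[\Qcal_K(M\times\Ocal^*,\Scal\otimes\Scal_{\Ocal^*},Z_\Ocal^{=0})\bigr]^K$, where $Z_\Ocal^{=0}=\{d_\Ocal=0\}$ for a certain locally constant function $d_\Ocal$ on the critical set $Z_\Ocal$. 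The vanishing then comes from the geometric statement that $Z_\Ocal^{=0}=\emptyset$ for every $\Ocal$ when the action is not nice. Your sketch does not mention the shifting trick or the function $d_\Ocal$ at all; you attempt to localize directly on $M$ rather than on $M\times\Ocal^*$.

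More importantly, your argument has a genuine gap at the decisive step. You correctly note the easy inclusion $\kgot_m\subset\kgot_{\Phi_\Scal(m)}$, and you correctly identify that the real work is the opposite direction on derived subalgebras. But your justification for that opposite direction is essentially ``a nonzero contribution forces a transversality/dimension condition \ldots\ combined with the semisimple part $[\kgot_\beta,\kgot_\beta]$ acting trivially on the generic slice, forces $[\kgot_\beta,\kgot_\beta]$ to be conjugate into $[\kgot_M,\kgot_M]$''. This is the entire content of the theorem, and you have not supplied an argument for it: why would $[\kgot_\beta,\kgot_\beta]$ act trivially on the generic slice? That is precisely what must be extracted from the nonvanishing hypothesis, and it does not follow from anything you have written. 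Invoking a ``nondegeneracy in the sense of \cite{pep-vergne-acta}'' is a reference to the very result you are meant to be proving. Note also that the localized pieces $\Qcal_K(M,\Scal,Z_\beta)$ live in $\hat R(K)$ and can individually be nonzero yet cancel in the sum, so even the inference ``$\Qcal_K(M,\Scal)\neq 0$ implies some $\beta$ contributes with the right stabilizer type'' needs care; the actual argument in \cite{pep-vergne-acta} avoids this by working one multiplicity at a time via the shifting trick and the positivity encoded in $d_\Ocal$.
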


We suppose now that the $K$-action on $M$ is nice. The conjugacy class $(\kgot_\xi)$ satisfying (\ref{eq:nice}) is unique and is denoted $(\hgot_M)$ (see Lemma 3 in \cite{pep-vergne:magic}).

\begin{defi}A coadjoint orbit  $\Pcal\subset \kgot^*$ is admissible if $\Pcal$ carries a $\spinc$-bundle $\Scal_\Pcal$ such that
the corresponding moment map  is the inclusion $\Pcal\croc \kgot^*$. We denote simply by
$\QS_K(\Pcal)$ the element $\Qcal_K(\Pcal,\Scal_\Pcal)\in R(K)$.
\end{defi}

We can check easily \cite{pep-vergne:magic} that  $\QS_K(\Pcal)$ is either $0$ or an irreducible representation
of $K$, and that the map
$$
\Ocal\mapsto \pi_\Ocal^K:=\QS_K(\Ocal)
$$
defines a bijection between the regular admissible orbits and the dual $\widehat{K}$.
When $\Ocal$ is a regular admissible orbit, an admissible coadjoint orbit  $\Pcal$
is called an ancestor of $\Ocal$ (or a $K$-ancestor of $\pi^K_\Ocal$)
 if $\QS_K(\Pcal)=\pi^K_\Ocal$.

\medskip

Denote by $\Acal((\hgot_M))$ the set of admissible orbits  of type $(\hgot_M)$. The following important fact is proved in section 5 
of \cite{pep-vergne-acta}.

\begin{prop}
Let $\Pcal\in \Acal((\hgot_M))$. 
\begin{itemize}
\item If $\Pcal$ belongs to the set of regular values of $\Phi_\Scal$, the reduced space 
$$
M_\Pcal=\Phi^{-1}_\Scal(\Pcal)/K
$$
is an oriented orbifold equipped with a $\spinc$ bundle. The index of the corresponding Dirac operator on the orbifold $M_\Pcal$ is denoted 
$\QS(M_\Pcal)\in \Z$ \cite{Kawasaki81}.

\item In general, the $\spinc$ index $\QS(M_\Pcal)\in \Z$ associated to the (possibly singular) reduced space $M_\Pcal$ is defined 
by a deformation procedure.
\end{itemize}

\end{prop}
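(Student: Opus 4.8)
Both assertions are established in Section 5 of \cite{pep-vergne-acta}; let me describe the strategy, keeping the standing assumption that $M$ is compact.

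\emph{Reduction to level zero.} Let $\overline{\Scal_\Pcal}$ be the $\spinc$-bundle on $\Pcal$ conjugate to $\Scal_\Pcal$; by the Kostant relations (\ref{eq:kostant-L}) its moment map is $\xi\mapsto-\xi$. On $M\times\Pcal$ with the product $\spinc$-bundle the moment map is then $(m,\xi)\mapsto\Phi_\Scal(m)-\xi$, so $m\mapsto(m,\Phi_\Scal(m))$ identifies $\Phi^{-1}_\Scal(\Pcal)$, $K$-equivariantly, with the zero fibre of that moment map; hence
$$
M_\Pcal\ \cong\ (M\times\Pcal)/\!/_{0}\,K .
$$
A transversality computation shows that $\Pcal$ is a regular value of $\Phi_\Scal$ exactly when $0$ is a regular value of the moment map of $M\times\Pcal$. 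Thus it suffices to treat the reduced space $M_0:=\Phi^{-1}_\Scal(0)/K$ of a compact $\spinc$-manifold $(M,\Scal)$ at level $0$, which I do from now on.

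\emph{Regular case.} Suppose $0$ is a regular value of $\Phi_\Scal$, the hypothesis being understood (as it is used in \cite{pep-vergne-acta}) to include that $K$ acts with finite stabilizers on $Z:=\Phi^{-1}_\Scal(0)$; then $Z$ is a smooth compact $K$-invariant submanifold of codimension $\dim K$ and $M_0=Z/K$ is a compact orbifold. For $m\in Z$, the differential of $\Phi_\Scal$ identifies $T_mM/T_mZ$ with $\kgot^*$, while the infinitesimal action embeds $\kgot$ in $T_mZ$ with quotient $T_{[m]}M_0$; putting these together yields a $K$-equivariant isomorphism of vector bundles over $Z$,
$$
TM|_Z\ \cong\ \pi^*TM_0\ \oplus\ \big(Z\times(\kgot\oplus\kgot^*)\big),\qquad \pi\colon Z\to M_0 .
$$
The space $\kgot\oplus\kgot^*$ carries a canonical $K$-invariant complex structure, hence a canonical orientation and a canonical $\spinc$-bundle $\Scal_{\kgot}$; the orientations of $TM|_Z$ and of $\kgot\oplus\kgot^*$ then induce, through the splitting, a $K$-invariant orientation of $TM_0$. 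Since, once a $\spinc$-bundle $\Scal_W$ on $W$ is fixed, the $\spinc$-bundles for $V\oplus W$ are exactly the tensor products with $\Scal_W$, the bundle
$$
\Scal_0\ :=\ \hom_{\Clif}\big(\Scal_{\kgot},\,\Scal|_Z\big)
$$
of Clifford homomorphisms for the action of the summand $\kgot\oplus\kgot^*$ is a $K$-equivariant $\spinc$-bundle for $\pi^*TM_0$, and it descends to a $\spinc$-bundle on the orbifold $M_0$. One then defines $\QS(M_0)\in\Z$ to be the Kawasaki index \cite{Kawasaki81} of the associated $\spinc$ Dirac operator (with the convention $\QS(M_\Pcal)=0$ when $\Phi^{-1}_\Scal(\Pcal)=\emptyset$).

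\emph{Singular case.} When $0$ is not a regular value, the plan is to remove the obstruction by a generic perturbation before reducing. Following \cite{pep-vergne-acta}, one stratifies $M$ by the infinitesimal stabilizer of the $K$-action and defines $\QS(M_0)$ by induction on $\dim M$: on the principal stratum one applies the preceding construction after a small generic shift that turns $0$ into a regular value, while the lower strata contribute reduced spaces of strictly smaller dimension, handled by the inductive hypothesis. The part that demands real work — and the heart of Section 5 of \cite{pep-vergne-acta} — is to show that the resulting integer is independent of all auxiliary choices (the invariant connection on $\det(\Scal)$, the perturbation, the ordering of the strata), and that it reduces to the orbifold index of the first item whenever $\Pcal$ is a regular value of $\Phi_\Scal$.
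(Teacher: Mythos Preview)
The paper does not actually prove this proposition: the sentence immediately preceding it reads ``The following important fact is proved in section 5 of \cite{pep-vergne-acta}'', and no argument is given. So there is no in-paper proof to compare against; your proposal is already more than the paper offers.

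On its own merits, your sketch of the \emph{regular} case is sound and in fact parallels the construction the paper spells out later in Section~\ref{sec:stage} (Definition~\ref{def:E-red}): the splitting $\T M|_Z\simeq \pi^*\T M_0\oplus[\kgot_\C]$ and the division of Clifford modules is exactly how the reduced $\spinc$-bundle is produced. One point to flag: in the $\spinc$ setting the $2$-form $\Omega_\Scal$ can be degenerate, so ``$0$ regular value of $\Phi_\Scal$'' does \emph{not} by itself force locally free $K$-action on $Z$, unlike in the symplectic case. You slip this in as an extra hypothesis, but in \cite{pep-vergne-acta} it is the assumption $\Pcal\in\Acal((\hgot_M))$ (i.e.\ $\Pcal$ has the generic orbit-type) that does the work; that is where the orbifold structure really comes from, and it deserves mention.

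Your description of the \emph{singular} case is where I would push back. You write that one ``stratifies $M$ by the infinitesimal stabilizer'' and proceeds ``by induction on $\dim M$'', with lower strata contributing separately. That is not what ``deformation procedure'' means here, and it is not how \cite{pep-vergne-acta} operates. The actual mechanism is a perturbation of the moment map itself: one replaces $\Phi_\Scal$ by a nearby generic value (equivalently, shifts $\Pcal$ to a nearby orbit of the same type lying in the regular-value locus), takes the orbifold index there, and proves that the answer is locally constant in the perturbation. No stratification of $M$ and no dimension induction enter. Your last sentence, about independence of auxiliary choices being the real content, is correct in spirit, but the route you describe to get there is not the one taken.
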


The $[Q,R]=0$ Theorem in the $\spinc$ setting takes the following form.

\begin{theo}[\cite{pep-vergne-acta}]\label{theo:QR-compact} 

Let $\Ocal$ be a regular admissible orbit.

The multiplicity of the representation $\pi^K_\Ocal$ in $\Qcal_K(M,\Scal)$ is equal to
$$
\sum_{\Pcal}  \QS(M_{\Pcal})
$$
where the sum runs over the ancestors of $\Ocal$ of type $(\hgot_M)$. In other words
$$
\Qcal_K(M,\Scal)=\sum_{\Pcal\in \Acal((\hgot_M))}\QS(M_{\Pcal})\,\QS_K(\Pcal).
$$

\end{theo}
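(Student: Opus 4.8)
The plan is to compute $\Qcal_K(M,\Scal)$ by a Witten-type deformation of the $\spinc$ Dirac operator together with a non-abelian localization, following the scheme of Meinrenken--Sjamaar but adapted to the $\spinc$ situation. Fix a $K$-invariant Riemannian metric on $M$ and a $K$-invariant scalar product on $\kgot$, using the latter to regard $\Phi_\Scal$ as a map $M\to\kgot$, and form the Kirwan vector field $\kappa_m=\big(\Phi_\Scal(m)\big)\cdot m$. Deform the principal symbol of $\Dcal_\Scal$ into the family $(m,v)\mapsto\clif(v-t\,\kappa_m)$, $t\to\infty$: each member is a transversally elliptic symbol, the $K$-index is independent of $t$ and equals $\Qcal_K(M,\Scal)$, and the index localizes on the critical set $\Cr=\{\kappa=0\}$ of $\|\Phi_\Scal\|^2$. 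As $M$ is compact, $\Cr$ is a finite disjoint union of compact $K$-invariant pieces $Z_\beta=K\cdot\big(M^\beta\cap\Phi^{-1}_\Scal(\beta)\big)$ indexed by a finite set $\Bcal$ of critical values, whence a decomposition
$$
\Qcal_K(M,\Scal)=\sum_{\beta\in\Bcal}\Qcal_\beta,\qquad \Qcal_\beta\in\hat R(K),
$$
with $\Qcal_\beta$ the index contribution of a small $K$-invariant neighbourhood of $Z_\beta$.

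\emph{The main stratum.} The pieces $Z_\beta$ with $K\beta\in\Acal((\hgot_M))$ form the main part of $\Cr$. Near such a $Z_\beta$, the data $(M,\Scal)$ has a $K$-equivariant normal form as a twisted product built over $M_\Pcal=\Phi^{-1}_\Scal(\Pcal)/K$ (with $\Pcal=K\beta$): the $\spinc$ bundle is, up to the shift coming from a polarisation of the normal bundle, the pull-back of the $\spinc$ bundle of $M_\Pcal$ tensored with $\Scal_\Pcal$. Evaluating the localized index of this model by Kawasaki's $\spinc$ index theorem for orbifolds \cite{Kawasaki81} gives exactly $\QS(M_\Pcal)\,\QS_K(\Pcal)$; summing over the $\Pcal\in\Acal((\hgot_M))$ lying in the image of $\Phi_\Scal$ produces the right-hand side of the asserted identity. (When $\Pcal$ is not a regular value, a further small generic perturbation of $\Phi_\Scal$ reduces to the regular case and is precisely what \emph{defines} $\QS(M_\Pcal)$; one checks the count is unaffected.)

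\emph{Vanishing of the remaining contributions.} One must show $\Qcal_\beta=0$ whenever $K\beta\notin\Acal((\hgot_M))$ (non-admissible level, or admissible of the wrong type). This is proved by induction on $\dim M+\dim K$: near $Z_\beta$ one passes to the $\spinc$ cross-section $(Y_\beta,\Scal_{Y_\beta},K_\beta)$ at $\beta$, writes $\Qcal_\beta$ via the holomorphic induction functor $\HolB$ applied to the $K_\beta$-equivariant quantization of $Y_\beta$, and combines the inductive hypothesis with the vanishing theorem for non-nice actions recalled above. If a representation $\pi^K_\Ocal$ occurred in $\HolB$ of such a module, the infinitesimal data at $\beta$ would violate the ``magic inequality'' of \cite{pep-vergne:magic}, which compares $\|\beta\|$ with the distance from $\beta$ to the walls selected by $\Ocal$ and by the polarisation of the normal bundle of $Z_\beta$. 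Hence $\Qcal_\beta$ has no $\pi^K_\Ocal$-isotypic part for any $\Ocal$, so $\Qcal_\beta=0$, and summing the decomposition above leaves only the main stratum; the formula follows.

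\emph{Main obstacle.} The delicate point is the vanishing step: producing a clean $K_\beta$-equivariant normal form near an \emph{arbitrary} critical component --- correctly identifying the $\spinc$ bundle there, including the twist by the normal bundle and the attendant $\rho$-shifts --- and then establishing the positivity estimate that excludes every $\pi^K_\Ocal$. It is the bookkeeping of these $\rho$-shifts, invisible in the symplectic/prequantised case, that forces the sum to run over the admissible orbits of type $(\hgot_M)$ rather than over the zero fibre of $\Phi_\Scal$, and that makes the $\spinc$ case substantially harder than its symplectic analogue.
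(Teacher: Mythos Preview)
The paper does not contain a proof of this statement: Theorem~\ref{theo:QR-compact} is stated with attribution to \cite{pep-vergne-acta} and is simply recalled in Section~\ref{sec:QR-compact} as background for the rest of the article, with no argument given here. There is therefore nothing in the present paper to compare your proposal against.

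For what it is worth, your sketch is a reasonable outline of the strategy actually carried out in \cite{pep-vergne-acta}: Witten deformation of the Dirac symbol along the Kirwan vector field, localization on the components $Z_\beta$ of the critical set of $\|\Phi_\Scal\|^2$, identification of the main-stratum contributions with $\QS(M_\Pcal)\,\QS_K(\Pcal)$, and vanishing of the remaining contributions by passage to the slice $(Y_\beta,K_\beta)$, holomorphic induction, and the inequality of \cite{pep-vergne:magic}. The point you flag as the main obstacle --- tracking the $\rho$-shifts in the $\spinc$ normal form and turning them into the positivity estimate that kills the off-type strata --- is indeed where the bulk of the work in \cite{pep-vergne-acta} lies, but none of that analysis is reproduced in the paper under review.
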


%%%%%%%%%%%%%%%%%%%%%%%%%%%%%%%%%%%%%%%%%%%%%
\section{Formal geometric quantization in the $\spinc$ setting}\label{sec:QR-non-compact}
%%%%%%%%%%%%%%%%%%%%%%%%%%%%%%%%%%%%%%%%%%%%%

In this section the manifold $M$ is not necessarily compact, but the moment map $\Phi_\Scal$ is supposed to be proper.

We  choose  an invariant scalar product in $\kgot^*$ that provides an identification
$\kgot\simeq\kgot^*$.

\begin{defi}\label{defi:kir}
$\bullet$ The {\em Kirwan vector field} associated to $\Phi_\Scal$ is defined by
\begin{equation}\label{eq-kappa}
    \kappa_{\Scal}(m)= -\Phi_\Scal(m)\cdot m, \quad m\in M.
\end{equation}

$\bullet$ We denote by $Z_\Scal$ the set of zeroes of $\kappa_{\Scal}$. Thus $Z_\Scal$ is a $K$-invariant closed subset of $M$.
\end{defi}

The set $Z_\Scal$, which is not necessarily smooth, admits an easy description. Choose a Weyl chamber $\mathfrak{t}^*_+\subset \mathfrak{t}^*$ in the dual 
of the Lie algebra of a maximal torus $T$ of $K$. We see that 
\begin{equation}\label{eq=Z-Phi-beta}
Z_\Scal=\coprod_{\beta\in\Bcal_\Scal} Z_\beta
\end{equation}
where $Z_\beta$ corresponds to the compact set $K(M^\beta\cap\Phi_\Scal^{-1}(\beta))$, and $\Bcal_\Scal=\Phi_\Scal(Z_\Scal)\cap \tgot^*_+$.  
The properness of $\Phi_\Scal$ insures that for any compact subset $C\subset \tgot^*$ the intersection $\Bcal_\Scal\cap C$ is finite.

The principal symbol of the Dirac operator $D_\Scal$ is the bundle map
$\sigma(M,\Scal)\in \Gamma(\T^* M, \hom(\Scal^+,\Scal^-))$ defined by the Clifford action
$$\sigma(M,\Scal)(m,\nu)=\clif_{m}(\tilde{\nu}): \Scal\vert_m^+\to \Scal\vert_m^-.$$
where $\nu\in \T^* M\simeq \tilde{\nu}\in \T M$ is an identification associated to an invariant Riemannian metric on $M$.
\begin{defi}\label{def:pushed-sigma}
The symbol  $\sigma(M,\Scal,\Phi_\Scal)$ shifted by the vector field $\kappa_{\Scal}$ is the
symbol on $M$ defined by
$$
\sigma(M,\Scal,\Phi_\Scal)(m,\nu)=\sigma(M,\Scal)(m,\tilde{\nu}-\kappa_\Scal(m))
$$
for any $(m,\nu)\in\T^* M$.
\end{defi}

For any $K$-invariant open subset $\Ucal\subset M$ such that $\Ucal\cap Z_\Scal$ is compact in $M$, we see that the restriction
$\sigma(M,\Scal,\Phi_\Scal)\vert_\Ucal$ is a transversally elliptic symbol on $\Ucal$, and so its equivariant index is a well defined element in
$\hat{R}(K)$ (see \cite{Atiyah74,pep-vergne:witten}).

Thus we can define the following localized equivariant indices.

\begin{defi}\label{def:indice-localise}
\begin{itemize}
\item A closed invariant subset $Z\subset Z_\Scal$ is called a {\em component} of $Z_\Scal$ if it is a union of connected components of $Z_\Scal$.

\item  If $Z$ is a {\em compact component} of $Z_\Scal$, we denote by
$$
\Qcal_K(M,\Scal,Z)\ \in\ \hat{R}(K)
$$
the equivariant index of $\sigma(M,\Scal,\Phi)\vert_\Ucal$ where $\Ucal$ is an invariant neighbourhood of $Z$
so that $\Ucal\cap Z_\Scal=Z$.
\end{itemize}
\end{defi}

By definition, $Z=\emptyset$ is a component of $Z_\Scal$ and $\Qcal_K(M,\Scal,\emptyset)=0$. For any $\beta\in\Bcal_\Scal$, $Z_\beta$ is a 
compact component of  $Z_\Scal$.

When the manifold $M$ is compact, the set $\Bcal_\Scal$ is finite and we have the decomposition
$$
\Qcal_K(M,\Scal)=\sum_{\beta\in\Bcal_\Scal}\Qcal_K(M,\Scal,Z_\beta).
$$
See \cite{pep-RR,pep-vergne:witten}.

When the manifold $M$ is not compact, but the moment map $\Phi_\Scal$ is proper, we can defined
$$
\qfor_K(M,\Scal):=\sum_{\beta\in\Bcal_\Scal}\Qcal_K(M,\Scal,Z_\beta)
$$
The sum of the right hand side is not necessarily finite but it converges in $\hat{R}(K)$ (see \cite{pep-pacific,Ma-Zhang14,Hochs-Song-duke}). 
We call $\qfor_K(M,\Scal)\in \hat{R}(K)$ the formal geometric quantization of the data $(M,\Scal,\Phi_\Scal, K)$.

Hochs and Song prove the following important property concerning the functoriality of $\qfor_K$ relatively to the product of manifolds.

\medskip

\begin{theo}[\cite{Hochs-Song-duke}]\label{theo:hochs-song}
Let $(M,\Scal)$ be a $\spinc$ $K$-manifold with a proper moment map $\Phi_\Scal$. 
Let $(P,\Scal_P)$ be a compact $\spinc$ $K$-manifold (even dimensional and oriented). Then the $\spinc$ manifold
$(M\times P,\Scal\boxtimes \Scal_P)$ admits a proper moment map and we have the following equality in $\hat{R}(K)$:
$$
\qfor_K(M\times P,\Scal\boxtimes \Scal_P) = \qfor_K(M,\Scal)\otimes \Qcal_K(P,\Scal_P).
$$
\end{theo}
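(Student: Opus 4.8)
\medskip
\noindent\textbf{Proof strategy.}
The plan is to localise both sides on the zeros of the relevant Kirwan vector fields and, component by component, to reduce the statement to Atiyah's multiplicativity of the equivariant index of transversally elliptic symbols under exterior products, $\mathrm{Ind}_{K\times K}(\sigma_1\boxtimes\sigma_2)=\mathrm{Ind}_K(\sigma_1)\boxtimes\mathrm{Ind}_K(\sigma_2)$, followed by restriction along the diagonal $K\subset K\times K$, under which $a\boxtimes b$ becomes $a\otimes b$ in $\hat R(K)$.

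Since $\det(\Scal\boxtimes\Scal_P)=\det(\Scal)\boxtimes\det(\Scal_P)$, the product connection gives the moment map $\Phi_{\Scal\boxtimes\Scal_P}(m,p)=\Phi_\Scal(m)+\Phi_{\Scal_P}(p)$; as $P$ is compact, $\Phi_{\Scal_P}$ is bounded, so $\Phi_{\Scal\boxtimes\Scal_P}$ is proper, and the whole family $\Phi_\Scal+t\,\Phi_{\Scal_P}$ ($t\in[0,1]$) remains proper and is realised by invariant connections on $\det(\Scal\boxtimes\Scal_P)$. Using that $\qfor_K$ is insensitive to such a choice, one may rescale $\Phi_{\Scal_P}$ by a small $\varepsilon>0$. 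When $M$ is compact the statement is the K\"unneth formula for the elliptic index, so assume $M$ non-compact and work with $\qfor_K(M,\Scal)=\sum_{\beta\in\Bcal_\Scal}\Qcal_K(M,\Scal,Z_\beta)$ and $\qfor_K(M\times P,\Scal\boxtimes\Scal_P)=\sum_{\widetilde\beta}\Qcal_K(M\times P,\Scal\boxtimes\Scal_P,Z_{\widetilde\beta})$, both convergent in $\hat R(K)$ by properness.

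The core is a local statement for each $\beta\in\Bcal_\Scal$. Choose a small invariant neighbourhood $\Ucal_\beta$ of $Z_\beta$ in $M$ with $\overline{\Ucal_\beta}$ compact and $\Ucal_\beta\cap Z_\Scal=Z_\beta$. On $\Ucal_\beta\times P$ the exterior product of the shifted symbol $\sigma(M,\Scal,\Phi_\Scal)\vert_{\Ucal_\beta}$ with the elliptic symbol $\sigma(P,\Scal_P)$ is transversally elliptic for the diagonal $K$-action, its characteristic set in $\T^*_K(\Ucal_\beta\times P)$ being the compact set $Z_\beta\times P$ in the zero section; hence by the multiplicativity above its index equals $\Qcal_K(M,\Scal,Z_\beta)\otimes\Qcal_K(P,\Scal_P)$. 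It then remains to join this exterior product symbol, through transversally elliptic symbols whose characteristic sets stay in a fixed compact subset of $\Ucal_\beta\times P$, to the genuine shifted symbol $\sigma(M\times P,\Scal\boxtimes\Scal_P,\Phi_{\Scal\boxtimes\Scal_P})$ near $Z_{\widetilde\beta}:=Z_{\Scal\boxtimes\Scal_P}\cap(\Ucal_\beta\times P)$ (which for $\varepsilon$ small is a compact component of $Z_{\Scal\boxtimes\Scal_P}$): the natural path switches on the $P$-contribution to both shifting fields, passing from $(\kappa_\Scal(m),0)$ to $\big({-}(\Phi_\Scal(m)+\Phi_{\Scal_P}(p))\cdot m,\,{-}(\Phi_\Scal(m)+\Phi_{\Scal_P}(p))\cdot p\big)$, and the $\spinc$ data match along it. Granting that the characteristic set stays compactly contained (see below), homotopy invariance of the index gives $\Qcal_K(M\times P,\Scal\boxtimes\Scal_P,Z_{\widetilde\beta})=\Qcal_K(M,\Scal,Z_\beta)\otimes\Qcal_K(P,\Scal_P)$; summing over $\beta\in\Bcal_\Scal$, after checking that the $Z_{\widetilde\beta}$ exhaust $Z_{\Scal\boxtimes\Scal_P}$, proves the theorem.

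The main obstacle is this last homotopy. The Kirwan field of the product, $\kappa(m,p)={-}(\Phi_\Scal(m)+\Phi_{\Scal_P}(p))\cdot(m,p)$, couples the two factors through the \emph{total} moment map, so along the path one must keep characteristic points from drifting to $\partial\Ucal_\beta$. Here the compactness of $P$ is essential: on a small $\Ucal_\beta$ the map $\Phi_\Scal$ stays close to $K\beta$ and $\varepsilon\Phi_{\Scal_P}$ is a uniformly small perturbation, so a transversality estimate near $Z_\beta$ --- in the spirit of the Witten-type deformations of Tian--Zhang and Ma--Zhang --- should confine the characteristic set throughout the homotopy. Carrying out this estimate uniformly in the parameter, and identifying the component $Z_{\widetilde\beta}$ and the matching of $\spinc$ data, is the technical heart; the rest is the now-standard excision and gluing formalism for indices of transversally elliptic symbols.
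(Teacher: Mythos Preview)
The paper does not prove this theorem: it is stated as a result of Hochs and Song \cite{Hochs-Song-duke} and used as a black box throughout (for the shifting trick, for Proposition~\ref{prop:first-computation}, and in the reduction to abelian stabilizers in Lemma~\ref{lem:simple}). There is therefore no proof in the paper to compare your attempt against.

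As for the substance of your sketch: the outline --- reduce to a local statement near each $Z_\beta$, invoke Atiyah's product formula for transversally elliptic indices, and deform the exterior product symbol to the genuine Kirwan-shifted symbol on $M\times P$ --- is a plausible topological route, and the observation that $\det(\Scal\boxtimes\Scal_P)=\det(\Scal)\boxtimes\det(\Scal_P)$ gives properness is correct. But you explicitly flag the real difficulty without resolving it: controlling the characteristic set of the homotopy $(\kappa_\Scal(m),0)\rightsquigarrow\kappa_{\Scal\boxtimes\Scal_P}(m,p)$ so that it stays in a fixed compact of $\Ucal_\beta\times P$, and simultaneously ensuring that the components $Z_{\widetilde\beta}$ exhaust $Z_{\Scal\boxtimes\Scal_P}$ with no spurious pieces. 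Saying that ``a transversality estimate \ldots\ should confine the characteristic set'' is not a proof; this is exactly where the work lies, and the Tian--Zhang/Ma--Zhang style estimates you allude to require genuine analysis that is not supplied here. The actual argument in \cite{Hochs-Song-duke} proceeds differently, via the analytic realisation of $\qfor_K$ through deformed Dirac operators and Braverman-type index theory, rather than the purely symbolic K-theoretic deformation you propose. Your approach might be completable, but as written it is a strategy with its central estimate left open.
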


\medskip

With Theorem \ref{theo:hochs-song} in hand we can compute the multiplicities of $\qfor_K(M,\Scal)$ like in the compact setting by using the shifting trick. 

Let $\Ocal$ be an admissible regular orbit of $K$. We denote by $[\pi^K_\Ocal:\qfor_K(M,\Scal)]$ the multiplicity of $\pi^K_\Ocal$ in 
$\qfor_K(M,\Scal)\in \hat{R}(K)$. Let $\Ocal^*$ be the admissible orbit $-\Ocal$: we have $\Qcal_K(\Ocal^*,\Scal_{\Ocal^*})=(\pi^K_\Ocal)^*$.
Thanks to Theorem \ref{theo:hochs-song} we get 
\begin{eqnarray*}
[\pi^K_\Ocal:\qfor_K(M,\Scal)]&=&\left[\qfor_K(M,\Scal)\otimes \Qcal_K(\Ocal^*,\Scal_{\Ocal^*})\right]^K\\
&=& \left[\qfor_K(M\times \Ocal^*,\Scal\otimes\Scal_{\Ocal^*})\right]^K.
\end{eqnarray*}

We consider the product $M\times \Ocal^*$ equipped with the  $\spinc$-bundle $\Scal\otimes\Scal_{\Ocal^*}$.
 The corresponding  moment map is $\Phi_{\Scal\otimes\Scal_{\Ocal^*}}(m,\xi)=\Phi_\Scal(m)+\xi$.
We use the simplified notation $\Phi_\Ocal$ for $\Phi_{\Scal\otimes\Scal_{\Ocal^*}}$, $\kappa_\Ocal$  for the
 corresponding Kirwan vector field on $M\times \Ocal^*$, and $Z_\Ocal:=\{\kappa_\Ocal=0\}$. 
 
 In \cite{pep-vergne-acta}, we introduced a locally constant function $d_\Ocal: Z_\Ocal\to \R$, and we denote 
 $Z_\Ocal^{=0}=\{d_\Ocal=0\}$. Using the localization\footnote{In \cite{pep-vergne-acta} we work in the compact setting, but 
 exactly the same proof works in the non compact setting,  as noticed by Hochs and Song \cite{Hochs-Song-duke}.} done in \cite{pep-vergne-acta}[section 4.5], we get that 
\begin{equation}\label{eq:Z-Ocal-0}
[\pi^K_\Ocal:\qfor_K(M,\Scal)]=
 \left[\qfor_K(M\times \Ocal^*,\Scal\otimes\Scal_{\Ocal^*},Z_\Ocal^{=0})\right]^K
\end{equation}
 
 Finally we obtain the same result like in the compact setting:
 \begin{itemize}
\item If the $K$ action on $M$ is not nice, $Z_\Ocal^{=0}=\emptyset$ and then the multiplicity 
$[\pi^K_\Ocal:\qfor_K(M,\Scal)]^K$ vanishes for any 
 regular admissible orbit $\Ocal$.
 \item If the $K$ action on $M$ is nice, we have 
$[\pi^K_\Ocal:\qfor_K(M,\Scal)]^K=\sum_{\Pcal}  \QS(M_{\Pcal})$ where the sum runs over the ancestors of $\Ocal$ of type $(\hgot_M)$.
 \end{itemize}
 
 In other words, 
 \begin{itemize}
\item if the $K$ action on $M$ is not nice, then $\qfor_K(M,\Scal)=0$, 

\item if the $K$ action on $M$ is nice, we have 
$$
\qfor_K(M,\Scal)=\sum_{\Pcal\in \Acal((\hgot_M))}\QS(M_{\Pcal})\,\QS_K(\Pcal).
$$
 \end{itemize}

 \begin{rem}\label{rem:abelian-stabilizer}
 We will use a particular case of identity (\ref{eq:Z-Ocal-0}) when the generic infinitesimal stabilizer of the $K$-action on $M$ is {\em abelian}, i.e. 
 $([\kgot_M,\kgot_M])=0$. In this case $Z_\Ocal^{=0}=\{\Phi_\Ocal=0\}$ and then
\begin{eqnarray*}
[\pi^K_\Ocal:\qfor_K(M,\Scal)]^K&=&
 \left[\qfor_K(M\times \Ocal^*,\Scal\otimes\Scal_{\Ocal^*},\{\Phi_\Ocal=0\})\right]^K\\
 &=& \QS(M_{\Ocal}).
\end{eqnarray*}
 \end{rem}

%%%%%%%%%%%%%%%%%%%%%%%%%%%%%%
\section{Functoriality relatively to a subgroup}
%%%%%%%%%%%%%%%%%%%%%%%%%%%%%%

We come back to the setting of $K$-manifold $M$, even dimensional and oriented, equipped with an equivariant $\spinc$ bundle $\Scal$. We suppose that for some choice of connection on $\det(\Scal)$ the corresponding moment map $\Phi_\Scal$ is proper. As explained earlier, 
the formal geometric quantization  of $(M,\Scal,K)$ is well-defined :
$\qfor_K(M,\Scal)\in \hat{R}(K)$.

Consider now a closed connected subgroup $H\subset K$. Let ${\rm p}:\kgot^*\to \hgot^*$ be the canonical projection. The map 
${\rm p}\circ \Phi_\Scal$ correspond to the moment map for the $\spinc$ bundle $\Scal$ relative to the $H$-action.

This section is dedicated to the proof of our main result.

\begin{theo}\label{theo-principal}
Suppose that ${\rm p}\circ \Phi_\Scal$ is proper. Then the $K$-module $\qfor_K(M,\Scal)$ is $H$-admissible, and
\begin{equation}\label{eq:main}
\qfor_K(M,\Scal)\vert_H=\qfor_H(M,\Scal).
\end{equation}

\end{theo}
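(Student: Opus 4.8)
The plan is to reduce the statement to the compact-quantization computation of Theorem~\ref{theo:QR-compact} via the product trick, using de Concini--Procesi compactifications of $K$ to replace the homogeneous space $K/H$ by a compact $\spinc$ $K$-manifold. The starting point is the following observation: for a regular admissible orbit $\Ocal_H$ of $H$, the multiplicity $[\pi^H_{\Ocal_H}:\qfor_K(M,\Scal)\vert_H]$ should be computed, on the one hand, directly from the branching of the $K$-module, and on the other hand from $\qfor_H(M,\Scal)$ by Remark~\ref{rem:abelian-stabilizer}-type reductions. So the first step is to express $[\pi^H_{\Ocal_H}:\qfor_H(M,\Scal)]$ as $\bigl[\qfor_H(M\times\Ocal_H^*,\Scal\otimes\Scal_{\Ocal_H^*})\bigr]^H$ using Hochs--Song (Theorem~\ref{theo:hochs-song}), exactly as done in Section~\ref{sec:QR-non-compact}; this requires checking that $\mathrm{p}\circ\Phi_\Scal$ proper implies the $H$-moment map on $M\times\Ocal_H^*$ is proper, which follows because $\Phi_{\Scal\otimes\Scal_{\Ocal_H^*}}(m,\xi)=\mathrm{p}(\Phi_\Scal(m))+\xi$ and $\Ocal_H^*$ is compact.

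The key geometric input is a compactification $\overline{K/H}$ (or rather an appropriate $K\times K$-equivariant de Concini--Procesi--style compactification $X$ of $K$, from which $K/H$ compactifications are built) carrying a $K$-equivariant $\spinc$ bundle whose $K$-equivariant quantization is, in the relevant degree, the induced representation $\mathrm{Ind}_H^K$ of the trivial (or of a prescribed) $H$-representation; more precisely one wants a compact $\spinc$ $K$-manifold $P$ such that $\Qcal_K(P\times\Ocal_H^*,\dots)$ captures, after taking $K$-invariants, the branching multiplicity $[\pi^H_{\Ocal_H}:\pi^K_\Ocal\vert_H]$ for all $\Ocal$. Then one forms $M\times P$, applies Theorem~\ref{theo:hochs-song} to get $\qfor_K(M\times P,\Scal\boxtimes\Scal_P)=\qfor_K(M,\Scal)\otimes\Qcal_K(P,\Scal_P)$, and identifies the $K$-invariant part of $\qfor_K(M,\Scal)\otimes\Qcal_K(P,\Scal_P)\otimes\Qcal_K(\Ocal_H^*)$ with, on one side, the $H$-multiplicity of $\qfor_K(M,\Scal)\vert_H$, and on the other side, via a $[Q,R]=0$ computation on $M\times P\times\Ocal_H^*$ (now with abelian generic stabilizer along the $\Ocal_H^*$ factor, cf.\ Remark~\ref{rem:abelian-stabilizer}), with $\QS$ of the symplectic/spin$^c$ reduction, which matches $[\pi^H_{\Ocal_H}:\qfor_H(M,\Scal)]$ computed in the previous paragraph. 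Matching the two reduced spaces amounts to the identity $\Phi_\Scal^{-1}(K\cdot\xi)/K \simeq (\mathrm{p}\circ\Phi_\Scal)^{-1}(\xi)/H$ for $\xi$ regular in $\hgot^*$, which is the usual reduction-in-stages / shifting-trick identity.

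Two things need care. First, \textbf{$H$-admissibility}: one must show that each $H$-isotypic multiplicity of $\qfor_K(M,\Scal)$ is finite, equivalently that for each $\Ocal_H$ only finitely many $\beta$-pieces $Z_\beta$ contribute; this follows from properness of $\mathrm{p}\circ\Phi_\Scal$ together with the convergence statement for $\qfor_H(M\times\Ocal_H^*,\cdot)$ in $\hat R(H)$, provided one has already established \eqref{eq:main} as a formal identity of multiplicities, so admissibility and the identity should be proved together. Second, and this is \textbf{the main obstacle}, one must actually construct the compactification $P=P_{\overline{K/H}}$ with a $\spinc$ bundle whose quantization realizes induction from $H$ to $K$ in the correct (top, or suitably shifted) degree and with the right moment map so that the reduced spaces on $M\times P\times\Ocal_H^*$ coincide with the $H$-reduced spaces of $M$; controlling the contributions of the boundary strata of $\overline{K/H}$ (the lower-dimensional $K$-orbits in the de Concini--Procesi compactification) and showing they do not spoil the identification is where the real work lies, and is presumably handled by the properness hypothesis forcing all relevant $Z_\beta$ to sit over the open orbit. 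Once $P$ is in hand, the rest is bookkeeping with Theorems~\ref{theo:QR-compact}, \ref{theo:hochs-song} and the shifting trick.
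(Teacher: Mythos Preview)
Your plan is in the right neighborhood---de Concini--Procesi compactification, the Hochs--Song product formula, and reduction in stages are exactly the ingredients the paper uses---but the architecture differs in one essential way, and that difference is precisely what resolves your ``main obstacle.''

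The paper does \emph{not} build a single compact $K$-manifold $P$ playing the role of $\overline{K/H}$ whose quantization realizes induction. Instead it uses the $K\times K$-equivariant compactification $\Xcal_P$ of $K_\C$ together with a \emph{one-parameter family} of $\spinc$ bundles $\Scal_P^n=\bigwedge(\T\Xcal_P)^{1,0}\otimes L_P^{\otimes n}$, and defines an auxiliary $H$-module
\[
E(n):=\bigl[\qfor_{H\times K}(M\times\Xcal_P,\Scal\boxtimes\Scal_P^n)\bigr]^K.
\]
The theorem then follows from two separate asymptotic computations of $E(n)$. On one side, Proposition~\ref{prop:Q-X-P} decomposes $\Qcal_{K\times K}(\Xcal_P,\Scal_P^n)$ into a ``diagonal'' piece $\sum\pi_\Ocal\otimes\pi_{\Ocal^*}$ coming from the interior $nP^\circ$ plus a boundary remainder supported on $n\partial P$; the properness of $\mathrm{p}\circ\Phi_\Scal$ is used to show the remainder tends to $0$ in $\hat R(H)$ as $n\to\infty$, giving $\lim_n E(n)=\qfor_K(M,\Scal)\vert_H$. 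On the other side, for $n$ large the scaling forces the zero set $\{\Phi_{\Scal_N^n}=0\}$ to lie in the open orbit $\Xcal_P^\circ\simeq K\times\Ucal_P$, and reduction in stages (Proposition~\ref{prop:localisation-stage}) identifies the localized index with $[\pi^H_{\Ocal'}:\qfor_H(M,\Scal)]$. So the mechanism that ``forces all relevant $Z_\beta$ to sit over the open orbit'' is not properness alone but the limit $n\to\infty$; without this scaling parameter, your plan has no device to kill the boundary strata, and a fixed $P$ will not do the job.

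Two further points. First, $H$-admissibility is proved independently and \emph{first} (Lemma~\ref{lem:simple}), directly from the $[Q,R]=0$ decomposition and properness; you should not couple it to the main identity. Second, the paper reduces at the outset to the case of abelian generic infinitesimal stabilizer by multiplying $M$ by the regular orbit $K\rho$ (again Lemma~\ref{lem:simple}); this is what makes Remark~\ref{rem:abelian-stabilizer} applicable in the second computation, not the presence of the $\Ocal_H^*$ factor as you suggest.
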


We start with the following

\begin{lem}\label{lem:simple}
$\bullet$ $\qfor_K(M,\Scal)$ is $H$-admissible when ${\rm p}\circ \Phi_\Scal$ is proper.

$\bullet$ It is sufficient to prove (\ref{eq:main}) for manifolds with abelian generic infinitesimal stabilizers.
\end{lem}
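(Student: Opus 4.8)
The plan is to handle the two bullets separately, since the second is a reduction step that will feed into the main argument of Theorem \ref{theo-principal}.

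For the first bullet ($H$-admissibility), I would argue as follows. Recall that a $K$-module $E=\sum_\mu \mm(\mu)\pi^K_\mu \in \hat R(K)$ restricted to $H$ is $H$-admissible precisely when, for each $\lambda\in\widehat H$, the coefficient of $\pi^H_\lambda$ in $\bigoplus_\mu \mm(\mu)\,\pi^K_\mu|_H$ is a finite sum. The key geometric input is that the multiplicities of $\qfor_K(M,\Scal)$ are governed by the moment map: by the description recalled at the end of Section \ref{sec:QR-non-compact}, $\pi^K_\mu$ (for $\mu$ regular admissible, say $\mu=\Ocal$) appears in $\qfor_K(M,\Scal)$ only if the reduced space $M_\Pcal$ is nonempty for some ancestor $\Pcal$ of type $(\hgot_M)$, hence only if $\Ocal$ (up to the shift by $\rho$ implicit in ``admissible'') meets the image $\Phi_\Scal(M)$ in the relevant sense; more precisely the support of $\qfor_K(M,\Scal)$, projected to $\tgot^*_+$, lies in a fixed neighbourhood of $\Phi_\Scal(M)\cap\tgot^*_+$. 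Now fix $\lambda\in\widehat H$. The representations $\pi^K_\mu$ whose restriction to $H$ contains $\pi^H_\lambda$ have highest weights $\mu$ lying in $\loc^{-1}(\text{bounded set})$ up to the $W$-action — this is the classical fact that ${\rm p}$ restricted to the highest-weight cone is proper onto its image in the appropriate sense, together with the inclusion relating weights of $\pi^K_\mu|_H$ to the $\loc$-image of the orbit $K\mu$. Combining: the set of $\mu$ with $\mm(\mu)\neq 0$ and $[\pi^H_\lambda:\pi^K_\mu|_H]\neq 0$ has $\Phi_\Scal$-image landing in $\Phi_\Scal(M)\cap {\rm p}^{-1}(\text{bdd})$, which is bounded because ${\rm p}\circ\Phi_\Scal$ is proper; by properness of $\Phi_\Scal$ itself this forces finitely many $\mu$. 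Hence $\qfor_K(M,\Scal)|_H\in\hat R(H)$ is well-defined, i.e. $H$-admissible. I expect the bookkeeping of the $\rho$-shifts and passing between ``highest weight'', ``infinitesimal character'' and ``moment image'' to be the fussiest point here, but it is standard.

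For the second bullet (reduction to abelian generic stabilizer), I would use the shifting/multiplicativity machinery. Both sides of (\ref{eq:main}) are elements of $\hat R(H)$, so it suffices to compare the multiplicity of each $\pi^H_\lambda$, $\lambda$ regular admissible for $H$, on the two sides. Using Theorem \ref{theo:hochs-song} and the shifting trick exactly as in Section \ref{sec:QR-non-compact} (now over the group $H$, with the product $M\times\Ocal^*$ replaced by $M\times\Ocal_H^*$ for $\Ocal_H$ the $H$-orbit corresponding to $\lambda$), one rewrites $[\pi^H_\lambda:\qfor_K(M,\Scal)|_H]$ and $[\pi^H_\lambda:\qfor_H(M,\Scal)]$ each as an $H$-invariant part of a formal quantization of $M\times\Ocal_H^*$. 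The point is that the manifold $M$ only enters through this product, and one can arrange, by a standard trick, to replace $M$ by a manifold with abelian generic infinitesimal $K$-stabilizer without changing either side: concretely, if $(\kgot_M)$ is not abelian one replaces $(M,\Scal)$ by $(M\times K\nu,\Scal\boxtimes\Scal_{K\nu})$ for a suitable coadjoint orbit $K\nu$ of type $(\hgot_M)$ (so that $\QS_K(K\nu)$ is a single irreducible and tensoring by it is, after the shift, an injection on multiplicities), and observes that the generic stabilizer of the $K$-action on $M\times K\nu$ is $K_\nu\cap\kgot_M$-conjugate to a torus, i.e. abelian, while by Theorem \ref{theo:hochs-song} applied over both $K$ and $H$ the validity of (\ref{eq:main}) for $M\times K\nu$ is equivalent to its validity for $M$.

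The main obstacle is the second bullet's reduction: one must choose the auxiliary orbit $K\nu$ so that (i) the generic infinitesimal stabilizer of $M\times K\nu$ genuinely becomes abelian, (ii) properness of ${\rm p}\circ\Phi$ is preserved (immediate, since $K\nu$ is compact), and (iii) tensoring with $\Qcal_K(K\nu,\Scal_{K\nu})$ is injective enough on multiplicities that recovering (\ref{eq:main}) for $M$ from (\ref{eq:main}) for $M\times K\nu$ is valid — this last point is where one uses that $\QS_K(K\nu)$ is (up to sign) a single irreducible, so the multiplicativity formula of Theorem \ref{theo:hochs-song} turns the identity for the product into the identity for $M$ after cancelling a common irreducible factor on both $K$- and $H$-sides compatibly. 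Verifying (i) is really a Lie-theoretic statement about generic stabilizers in products of coadjoint-type manifolds and is the genuinely new small lemma needed; the rest is formal manipulation of the already-cited functoriality.
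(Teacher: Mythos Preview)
Your argument for the first bullet is essentially the paper's: multiplicities of $\qfor_K(M,\Scal)$ are supported on admissible orbits $\Pcal$ meeting $\Phi_\Scal(M)$, and the branching $[\pi^H_{\Pcal'}:\QS_K(\Pcal)|_H]$ is nonzero only if $\Pcal'\subset{\rm p}(\Pcal)$; combining these with properness of ${\rm p}\circ\Phi_\Scal$ bounds the relevant $\Pcal$ into a compact set. Fine.

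The second bullet is where your plan diverges from the paper and acquires a real gap. You propose to take an auxiliary orbit $K\nu$ \emph{of type $(\hgot_M)$}, so that $\QS_K(K\nu)$ is a single irreducible, and then cancel that irreducible from both sides after applying multiplicativity. Two problems:

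\begin{itemize}
\item The cancellation step is false in $\hat R(H)$. Tensoring by a fixed nonzero element of $R(H)$ is not injective on $\hat R(H)$: already for $H=S^1$ one has $(1-t)\cdot\sum_{n\in\Z}t^n=0$. So ``tensoring by a single irreducible is an injection on multiplicities'' does not hold in the formal completion, and you cannot recover (\ref{eq:main}) for $M$ from (\ref{eq:main}) for $M\times K\nu$ this way.
\item Your proposed ``new small lemma'' (that the generic stabilizer of $M\times K\nu$ is abelian for $K\nu$ of type $(\hgot_M)$) is not obviously true for that choice of $\nu$, and in any case is unnecessary.
\end{itemize}

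The paper sidesteps both issues with one clean choice: take $K\nu=K\rho$, the \emph{regular} admissible orbit with $\QS_K(K\rho)$ equal to the \emph{trivial} representation. Regularity forces all point stabilizers in $K\rho$ to be tori, so the generic infinitesimal stabilizer of $M\times K\rho$ sits inside a maximal torus and is automatically abelian --- no extra lemma needed. And since $\QS_K(K\rho)$ is trivial, $\Qcal_H(K\rho,\Scal_{K\rho})$ is the trivial $H$-module, so multiplicativity over $K$ and over $H$ gives
\[
\qfor_K(M,\Scal)|_H=\qfor_K(M\times K\rho)|_H \quad\text{and}\quad \qfor_H(M\times K\rho)=\qfor_H(M,\Scal),
\]
with no cancellation required. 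Your strategic idea (pass to a product with a coadjoint orbit and invoke Theorem \ref{theo:hochs-song}) is exactly right; the missing insight is the specific choice $\nu=\rho$.
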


\begin{proof} We have $\qfor_K(M,\Scal)=\sum_\Pcal \QS(M_\Pcal) \QS_K(\Pcal)$ where the sum runs over the admissible orbits of type 
$(\hgot_M)$. 

Thanks to the $[Q,R]=0$ Theorem we know that $\QS_K(\Pcal)\vert_H= \QS_H(\Pcal)= \sum_{\Pcal'}\QS(\Pcal_{\Pcal'}) \QS_H(\Pcal')$, where 
$\Pcal_{\Pcal'}= \Pcal\cap {\rm p}^{-1}(\Pcal')/H$ is the reduction of the $K$-coadjoint orbit $\Pcal$ relatively to $H$-coadjoint orbit $\Pcal'$. 

Hence $\qfor_K(M,\Scal)$ is $H$-admissible if for any $H$-coadjoint orbit $\Pcal'$, the sum 
$$
\sum_\Pcal \QS(M_\Pcal)\QS(\Pcal_{\Pcal'}) 
$$
admits only a finite number of non-zero terms. We see that $\QS(\Pcal_{\Pcal'})\neq 0$ only if $\Pcal'\subset{\rm p}(\Pcal)$ and 
$\QS(M_\Pcal)\neq 0$ only if $\Phi_\Scal^{-1}(\Pcal)\neq\emptyset$. Finally $\QS(M_\Pcal)\QS(\Pcal_{\Pcal'})\neq 0$ only if 
$$
\Pcal\in K\Phi_\Scal\left(({\rm p}\circ \Phi_\Scal)^{-1}(\Pcal')\right).
$$
Since ${\rm p}\circ \Phi_\Scal$ is proper, we have only a finite number of $K$-admissible orbits contained in the compact set 
$K\Phi_\Scal\left(({\rm p}\circ \Phi_\Scal)^{-1}(\Pcal')\right)$. The first point is proved.

\medskip

Let us check the second point. Suppose that (\ref{eq:main}) holds for manifolds with abelian generic infinitesimal stabilizers. Let $K\rho$ be the regular 
admissible orbit such that $\QS_K(K\rho)$ is the trivial representation.

To any $\spinc$ manifold $(M,\Scal,K)$ with proper moment map $\Phi_\Scal$, we associate the product 
$M\times K\rho$ which is a $\spinc$ $K$-manifold with proper moment map $\Phi_{\Scal\boxtimes \Scal_{K\rho}}$. 
The multiplicative property (see Theorem \ref{theo:hochs-song}) gives 
$$
\qfor_K(M,\Scal)=\qfor_K(M\times K\rho,\Scal\boxtimes \Scal_{K\rho}).
$$
Now we remark that the $K$-manifold $M\times K\rho$ has abelian infinitesimal stabilizers, and that ${\rm p}\circ \Phi_{\Scal\boxtimes \Scal_{K\rho}}$ is proper if 
${\rm p}\circ \Phi_\Scal$ is proper. Then, when the moment map ${\rm p}\circ \Phi_\Scal$ is proper, we have
\begin{eqnarray*}
\qfor_K(M,\Scal)\vert_H&=&\qfor_K(M\times K\rho,\Scal\boxtimes \Scal_{K\rho})\vert_H\\
&=& \qfor_H(M\times K\rho,\Scal\boxtimes \Scal_{K\rho}) \hspace{21mm} [1]\\
&=& \qfor_H(M,\Scal)\otimes \Qcal_H(K\rho,\Scal_{K\rho})\hspace{15mm} [2]\\
&=&\qfor_H(M,\Scal).\hspace{42mm} [3]
\end{eqnarray*}
Here we see that $[1]$ is the identity (\ref{eq:main}) applied to $M\times K\rho$, $[2]$ is the multiplicative property relatively to the $H$-action, and 
$[3]$ is due to the fact that $\Qcal_H(K\rho,\Scal_{K\rho})$ is the trivial $H$-representation. $\Box$

\end{proof}

\subsection{De Concini-Procesi compactifications}\label{sec:Concini-Procesi}
%%%%%%%%%%%%%%%%%%%%%%%%%%%%%%%%%%%%%%%%%%%%

We recall that $T$ is a maximal torus of the compact connected Lie group $K$, and $W$ is the corresponding Weyl group. 
We define a {\em $K$-adapted} polytope in $\tgot^*$ to be a $W$-invariant Delzant polytope $P$ in $\tgot^*$
whose vertices are regular elements of the weight lattice $\Lambda$. If $\{\lambda_1,\ldots,\lambda_r\}$
are the dominant weights lying in the union of all the closed one-dimensional faces of
$P$, then there is a $G \times G$-equivariant embedding of $G= K_\C$ into
$$\Pbb( \bigoplus_{i=1}^r (V^K_{\lambda_i})^* \otimes V^K_{\lambda_i})$$
associating to $g \in G$ its representation on $\bigoplus_{i=1}^r V^K_{\lambda_i}$. 
The closure $\Xcal_{P}$ of the image of $G$ in this
projective space is smooth and is equipped with a $K\times K$-action. The restriction of the canonical K\"{a}hler structure on 
$\Xcal_P$ defines a symplectic $2$-form $\Omega_{\Xcal_P}$.
We recall briefly the different properties of $(\Xcal_P,\Omega_{\Xcal_P})$ : all the details can be found in \cite{pep-IF}.

\begin{enumerate}
\item[(1)] $\Xcal_P$ is equipped with an Hamiltonian action of $K\times K$. Let 
$\Phi:=(\Phi_l,\Phi_r): \Xcal_P\to\kgot^*\times\kgot^*$ be the corresponding moment map.

\item[(2)] The image of $\Phi$ is equal to $\{(k\cdot\xi,-k'\cdot\xi)\ \vert\ \xi\in P\ 
{\rm and}\ k,k'\in K\}$.

\item[(3)] The Hamiltonian $K\times K$-manifold $(\Xcal_P,\Omega_{\Xcal_P})$ has no multiplicities: the pull-back by $\Phi$ of a 
$K\times K$-orbit in the image is a $K\times K$-orbit in $\Xcal_P$.

\item[(4)] The symplectic manifold $ (\Xcal_P,\Omega_{\Xcal_P})$ is prequantized by the restriction 
of the hyperplane line bundle $\Ocal(1)\to\Pbb( \oplus_{i=1}^N (V^K_{\lambda_i})^* \otimes V^K_{\lambda_i})$ to $\Xcal_P$: 
let us denoted $L_P$ the corresponding $K\times K$-equivariant line bundle. 
\end{enumerate} 

Let $\Ucal_P:= K\cdot P^\circ$ where $P^\circ$ is the interior of $P$. We define
$$
\Xcal_P^\circ:=\Phi_l^{-1}(\Ucal_P)
$$
which is an invariant, open and dense subset of $\Xcal_P$. We have the following 
important property concerning $\Xcal_P^\circ$.

\begin{enumerate}
\item[(5)] There exists an equivariant diffeomorphism $\Upsilon  : K\times \Ucal_P\to \Xcal_P^\circ$
such that $\Upsilon^*(\Phi_l)(g,\nu)= g\cdot \nu$ and $\Upsilon^*(\Phi_r)(g,\nu)=-\nu$.
\end{enumerate}

The manifold $\Xcal_P$ is equipped with  a family of $\spinc$ bundles 
$$
\Scal_P^n:= \bigwedge (\T \Xcal_P)^{1,0}\otimes L_P ^{\otimes n}, \quad n\geq 1,
$$ 
and we consider the corresponding $K\times K$-modules $\Qcal_{K\times K}(\Xcal_P,\Scal_P^n)$.

Let $\tgot^*_+\subset\tgot^*$ be a Weyl chamber and let 
$\Lambda\subset \tgot^*$ be the lattice of weights: we denote by $\rho\in \tgot^*_+$ the half sum of the positive roots.  

\begin{prop}\label{prop:Q-X-P}
We have the following decomposition
$$
\Qcal_{K\times K}(\Xcal_P,\Scal_P^n)=\sum_{\Ocal\cap \{n\,P^o+\rho\}\neq \emptyset}\ \pi_\Ocal\otimes \pi_{\Ocal^*} +
\sum_{\Ocal\cap \{n\,\partial P+\rho\}\neq \emptyset}a_{n,\Ocal}\ \pi_\Ocal\otimes \pi_{\Ocal^*}.
$$
\end{prop}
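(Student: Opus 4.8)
The plan is to apply the $\spinc$ version of the $[Q,R]=0$ theorem (Theorem~\ref{theo:QR-compact}) to the compact $(K\times K)$-manifold $(\Xcal_P,\Scal_P^n)$. The preliminary point is that the $(K\times K)$-action on $\Xcal_P$ has an \emph{abelian} generic infinitesimal stabilizer: on the open dense orbit $K_\C\hookrightarrow\Xcal_P$ the $(K\times K)$-stabilizer of a generic point $g$ is isomorphic to $K\cap gKg^{-1}$, which is a conjugate of a maximal torus of $K$. Hence $[\kgot_{\Xcal_P},\kgot_{\Xcal_P}]=0$, the action is nice, and the associated conjugacy class is $(\hgot_{\Xcal_P})=(\tgot\oplus\tgot)$; the admissible coadjoint orbits of $K\times K$ of this type are exactly the products $\Ocal_1\times\Ocal_2$ of regular admissible $K$-orbits, each being the unique ancestor of type $(\hgot_{\Xcal_P})$ of $\pi_{\Ocal_1}\otimes\pi_{\Ocal_2}$, and $\QS_{K\times K}(\Ocal_1\times\Ocal_2)=\pi_{\Ocal_1}\otimes\pi_{\Ocal_2}$. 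Theorem~\ref{theo:QR-compact} (equivalently, the localization of \cite{pep-vergne-acta}, cf. Remark~\ref{rem:abelian-stabilizer} applied to $K\times K$) then gives
$$
\Qcal_{K\times K}(\Xcal_P,\Scal_P^n)=\sum_{\Ocal_1,\Ocal_2}\QS\big((\Xcal_P)_{\Ocal_1\times\Ocal_2}\big)\;\pi_{\Ocal_1}\otimes\pi_{\Ocal_2},
$$
the sum running over pairs of regular admissible $K$-orbits, where $(\Xcal_P)_{\Ocal_1\times\Ocal_2}$ is the $\spinc$ reduction of $\Xcal_P$ at $\Ocal_1\times\Ocal_2$ for the moment map $\Phi_{\Scal_P^n}$.

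It then remains to compute these reductions, for which I would identify $\Phi_{\Scal_P^n}$. Since $\det(\Scal_P^n)=\Kcal_{\Xcal_P}^{-1}\otimes L_P^{\otimes 2n}$, for compatible connections one has $\Phi_{\Scal_P^n}=n\,\Phi+\tfrac12\,\Phi_{\Kcal^{-1}}$, with $\Phi=(\Phi_l,\Phi_r)$ the moment map of property~(1) and $\Phi_{\Kcal^{-1}}$ the moment map of the anticanonical bundle. On the open dense piece $\Xcal_P^\circ\cong K\times\Ucal_P$ of property~(5), the component $\Phi_{\Kcal^{-1},l}$ is left-equivariant and right-invariant, so $\Phi_{\Scal_P^n}\circ\Upsilon$ has the form $(g,\nu)\mapsto\big(g\cdot\psi(\nu),-\psi(\nu)\big)$ for a $K$-equivariant $\psi\colon\Ucal_P\to\kgot^*$ which is a small perturbation of $\nu\mapsto n\nu$. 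A computation with the Hermitian geometry of $\Kcal_{\Xcal_P}$ (done in \cite{pep-IF}) should then show that $\Xcal_P^\circ$ is mapped by $\Phi_{\Scal_P^n}$ onto the part of $\kgot^*\times\kgot^*$ lying over $nP^\circ+\rho$, while $\Xcal_P\setminus\Xcal_P^\circ=\Phi_l^{-1}(K\cdot\partial P)$ is mapped into the part lying over $n\partial P+\rho$; the precise form of the shift by $\rho$ is the delicate point, see below.

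Two things now follow. First, $\Phi_{\Scal_P^n}$ inherits the no-multiplicity property~(3): over the $(K\times K)$-orbit of a pair of \emph{regular} elements the fibre is a single orbit (because $\psi$ is injective modulo the Weyl group), so $(\Xcal_P)_{\Ocal_1\times\Ocal_2}$ is empty unless $\Ocal_2=\Ocal_1^*$ and, when non-empty, is a single (orbifold) point, whence $\QS\big((\Xcal_P)_{\Ocal\times\Ocal^*}\big)\in\Z$ is well defined. Second, if the dominant element of $\Ocal$ lies in $nP^\circ+\rho$ this reduced point is a transverse genuine reduction and $\QS=1$; if it lies on $n\partial P+\rho$ the level is in general singular and $\QS$ equals the integer $a_{n,\Ocal}$ produced by the deformation procedure of \cite{pep-vergne-acta}; and if $\Ocal$ meets neither region the reduction is empty and contributes $0$. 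Substituting into the displayed sum yields the asserted decomposition.

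The main obstacle is the middle step — pinning down $\Phi_{\Scal_P^n}$, and in particular showing that the shift produced by the $\Kcal_{\Xcal_P}^{-1}$-factor of $\det(\Scal_P^n)$ is exactly by $\rho$, which enlarges the admissible range from the ``prequantum'' polytope $nP$ to $nP+\rho$; this is where one genuinely needs the fine geometry of $\Xcal_P$ from \cite{pep-IF} rather than just the formal properties~(1)--(4). An alternative that sidesteps the moment-map computation is to use that $\Scal_P^n$ is the Dolbeault--Dirac $\spinc$ structure twisted by $L_P^{\otimes n}$, so $\Qcal_{K\times K}(\Xcal_P,\Scal_P^n)=\chi(\Xcal_P,L_P^{\otimes n})$ as a $(K\times K)$-character, a quantity already computed in \cite{pep-IF} by the same $[Q,R]=0$ mechanism; one then only has to reconcile the symplectic parametrization $K\lambda\leftrightarrow V_\lambda$ of $\wK$ used there with the $\spinc$ parametrization $K(\lambda+\rho)\leftrightarrow\pi^K_{K(\lambda+\rho)}$, which is precisely what introduces the $+\rho$.
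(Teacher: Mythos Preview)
Your ``alternative'' in the last paragraph is precisely the route the paper takes, and it is short: since $\Scal_P^n=\bigwedge(\T\Xcal_P)^{1,0}\otimes L_P^{\otimes n}$ is the K\"ahler $\spinc$ bundle twisted by $L_P^{\otimes n}$, one has $\Qcal_{K\times K}(\Xcal_P,\Scal_P^n)=RR_{K\times K}(\Xcal_P,L_P^{\otimes n})$, and the paper applies the \emph{symplectic} Meinrenken--Sjamaar theorem with the prequantum moment map $\Phi=(\Phi_l,\Phi_r)$. The multiplicity of $V^K_\alpha\otimes V^K_\gamma$ is then the Riemann--Roch number of $\Phi^{-1}(K\tfrac{\alpha}{n}\times K\tfrac{\gamma}{n})/(K\times K)$, which by properties~(2) and~(3) vanishes unless $K\gamma=-K\alpha$ with $\alpha\in nP$, and equals $1$ when $\alpha\in nP^\circ$. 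The shift by $\rho$ comes exactly where you predicted, from the dictionary $V^K_\alpha=\pi^K_{K(\alpha+\rho)}$, and \emph{no} $\spinc$ moment map is ever computed.

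Your main approach via Theorem~\ref{theo:QR-compact} is sound in outline (the generic stabilizer analysis is correct, and the reduction to pairs $\Ocal\times\Ocal^*$ follows) but the obstacle you flag is a genuine gap rather than a detail: the computation of $\Phi_{\Scal_P^n}$ hinges on the anticanonical moment map of $\Xcal_P$, and this is \emph{not} carried out in \cite{pep-IF}, which stays in the prequantum setting throughout. The anticanonical bundle of a group compactification involves the boundary divisors in addition to the $2\rho$-contribution from the open cell, so showing that the net effect on the image is exactly a $+\rho$ shift of $nP$ is real work---and your injectivity claim for $\psi$ (needed for the multiplicity-free statement) is not automatic for small $n$. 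The payoff of this route would be staying entirely inside the $\spinc$ formalism of the present paper; the cost is a digression into $\Kcal_{\Xcal_P}$ that the symplectic shortcut bypasses in two lines.
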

\begin{proof} 
The result is a consequence of the Meinrenken-Sjamaar $[Q,R]=0$ theorem \cite{Meinrenken98,Meinrenken-Sjamaar,Tian-Zhang98,pep-RR}.
To explain it, we parametrize the dual $\widehat{K}$ with the highest weights. For any dominant weight 
$\alpha\in \Lambda\cap \tgot^*_+$, let $V^K_\alpha$ be the irreducible representation of $K$ with highest weight $\alpha$. In other terms, 
$V^K_\alpha=\pi^K_{\alpha+\rho}$.

The symplectic $[Q,R]=0$ theorem tells us that the multiplicity of $V_\alpha^K\otimes V_\gamma^K$ in $\Qcal_{K\times K}(\Xcal_P,\Scal_P^n)$ is equal to the 
Riemann-Roch number of the symplectic reduced space $\Phi^{-1}(K\frac{\alpha}{n}\times K\frac{\gamma}{n})/K\times K$.

Points $(2)$ above tell us that $\Phi^{-1}(Ka\times Kb)/K\times K$ is non empty only if $Kb=-Ka$ and $Ka\cap P\neq \emptyset$. With point $(4)$ , we see that 
the reduced space $\Phi^{-1}(Ka\times -Ka)/K\times K$ is a (smooth) point if $a\in P^o$. The proof is completed.
\end{proof}

\subsection{Cutting}

Let $M$ be a $K$-manifold of even dimension, oriented, equipped with a $K$-equivariant $\spinc$ bundle $\Scal$. Let $\Phi_\Scal$ be the moment map associated to a hermitian connection on $\det(\Scal)$.  We assume that $\Phi_\Scal$ is {\em proper}.

We consider the manifold $\Xcal_P$ equipped with the $\spinc$ bundle 
$\Scal_P^n:= \bigwedge (\T \Xcal_P)^{1,0}\otimes L_P ^{\otimes n}$. The determinant line bundle
$\det(\Scal_P^n)$ is equal to $$\det((\T \Xcal_P)^{1,0})\otimes L_P ^{\otimes 2n}.$$

Let $\varphi_l,\varphi_r:\Xcal_P\to\kgot^*$ be the moment maps associated to the action of $K\times K$ 
on the line bundle $\det((\T \Xcal_P)^{1,0})$. So the moment map 
relative to the action of $K\times K$ on $\det(\Scal_P^n)$ is the map $\Phi^n=(\Phi^n_l,\Phi^n_r): \Xcal_P\to\kgot^*\times \kgot^*$ 
defined by $\Phi^n_l= n\Phi_l +\varphi_l$ and $\Phi^n_r= n\Phi_r +\varphi_r$.

On the dense open subset $\Xcal_P^o\simeq K\times \Ucal_P$ the line bundle $\det((\T \Xcal_P)^{1,0})$ admits a trivialization. 
Let $c>0$ such that the closed ball $\{\|\xi\|\leq c\}$ is contained in $\Ucal_P$. By partition of unity, we can choose a connection on 
$\det((\T \Xcal_P)^{1,0})$ such that the corresponding moment maps satisfy
\begin{equation}\label{eq:varphi=0}
\varphi_l(x)=\varphi_r(x)=0
\end{equation}
if $x\in \Xcal_P$ satisfies $\|\Phi_r(x)\|\leq c$.

We consider now the product $M\times \Xcal_P$ equipped with the $\spinc$ bundle
$\Scal\boxtimes\Scal_P^n$ and with the following $K\times K$ action:
$$
(k_l,k_r)\cdot (m,x)= (k_r\cdot m, (k_l,k_r)\cdot x).
$$
The moment map relative to the action of $K\times K$ on the line bundle $\det(\Scal\boxtimes\Scal_P^n)$ is 
$$
(m,x)\longmapsto (\Phi^n_l(x), \Phi_\Scal(m)+\Phi^n_r(x)).
$$

We restrict the action of $K\times K$ on $M\times \Xcal_P$ to the subgroup $H\times K$. We see that the corresponding moment map
$(p\circ \Phi^n_l, \Phi_\Scal+\Phi^n_r)$ is proper, so we can consider the formal geometric quantization of the $\spinc$ manifold 
$(M\times \Xcal_P,\Scal\boxtimes\Scal_P^n)$ relative to the action of $H\times K$: 
$\qfor_{K\times H}(M\times \Xcal_P,\Scal\boxtimes\Scal_P^n)\in \hat{R}(K\times H)$.

We are interested in the following $H$-module 
$$
E(n):= \left[\qfor_{H\times K}(M\times \Xcal_P,\Scal\boxtimes\Scal_P^n)\right]^K.
$$

The proof of Theorem \ref{theo-principal} will follows from the computation of the asymptotic behaviour of $E(n)$ by two means.

\subsection{First computation}

Let us write $\qfor_{K}(M,\Scal)= \sum_{\Ocal} m_\Ocal\,\pi_\Ocal^K$.
%$$
%\quad 
%E(n)=\sum_{\mu\in\hat{H}} m(\mu,n) \pi_\mu^H.
%$$

\begin{prop}\label{prop:first-computation}We have  
$$
\lim_{n\to\infty} E(n)=\qfor_{K}(M,\Scal)\vert_H:=\sum_{\lambda\in\hat{K}} m_\Ocal\, \pi_\Ocal^K\vert_H \in \hat{R}(H)
$$
\end{prop}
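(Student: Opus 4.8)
The plan is to compute $E(n)$ by first performing the $K$-reduction on the $\Xcal_P$ factor, exploiting the ``no multiplicities'' property of $\Xcal_P$ (property (3)) and the concrete form of the moment map given by property (5). Since $\Xcal_P^\circ \simeq K\times \Ucal_P$ is open and dense with $\Phi_l(g,\nu)=g\cdot\nu$, $\Phi_r(g,\nu)=-\nu$, and since on the region $\{\|\Phi_r\|\le c\}$ we have arranged $\varphi_l=\varphi_r=0$, the moment map $\Phi^n$ restricted to the relevant part of $M\times\Xcal_P$ behaves like $(m,g,\nu)\mapsto (n\, g\cdot\nu,\ \Phi_\Scal(m)-n\nu)$. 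First I would use Theorem \ref{theo:hochs-song} (the multiplicative property of $\qfor$) together with the decomposition of $\Qcal_{K\times K}(\Xcal_P,\Scal_P^n)$ from Proposition \ref{prop:Q-X-P}: since $\Xcal_P$ is compact, we may write $\qfor_{H\times K}(M\times\Xcal_P,\Scal\boxtimes\Scal_P^n)$ in terms of $\qfor_H(M,\Scal)$ tensored against the $K\times K$-module $\Qcal_{K\times K}(\Xcal_P,\Scal_P^n)$ — wait, more precisely one keeps $K$ acting diagonally on the $\Xcal_P$-right factor and $M$, so one takes $K$-invariants of $\qfor_K(M,\Scal)\otimes_{K_{\mathrm{right}}} \Qcal_{K\times K}(\Xcal_P,\Scal_P^n)$.

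The second step is the key computation: taking $[\ \cdot\ ]^K$ of $\qfor_K(M,\Scal)\otimes \Qcal_{K\times K}(\Xcal_P,\Scal_P^n)$, where $K$ is the ``right'' copy. By Proposition \ref{prop:Q-X-P}, $\Qcal_{K\times K}(\Xcal_P,\Scal_P^n) = \sum_{\Ocal\cap\{nP^\circ+\rho\}\neq\emptyset}\pi_\Ocal\otimes\pi_{\Ocal^*} + (\text{boundary terms})$. Pairing the right factor $\pi_{\Ocal^*}$ against $\qfor_K(M,\Scal)=\sum_{\Ocal'}m_{\Ocal'}\pi^K_{\Ocal'}$ and taking $K$-invariants selects $\Ocal'=\Ocal$ (since $[\pi_\Ocal\otimes\pi_{\Ocal^*}]^K = 1$), leaving the left factor $\pi_\Ocal$, now viewed as an $H$-module via $H\subset K$. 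Hence, modulo the boundary contributions,
\[
E(n) \;=\; \sum_{\Ocal\cap\{nP^\circ+\rho\}\neq\emptyset} m_\Ocal\, \pi^K_\Ocal\big|_H \;+\; \sum_{\Ocal\cap\{n\partial P+\rho\}\neq\emptyset} a_{n,\Ocal}\, m_\Ocal\, \pi^K_\Ocal\big|_H.
\]
As $n\to\infty$, the dilated polytope $nP$ exhausts $\tgot^*$ (since $0\in P^\circ$, as $P$ is $W$-invariant with regular vertices), so for each fixed $\Ocal$ the coefficient of $\pi^K_\Ocal|_H$ stabilizes to $m_\Ocal$; the boundary terms $\{n\partial P+\rho\}$ escape to infinity. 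This gives the claimed limit in $\hat R(H)$.

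The main obstacle is making rigorous sense of the limit and of the manipulations in $\hat R(H)$: one must check that $E(n)$ is $H$-admissible for each $n$ and that the convergence $\lim_n E(n)$ holds in $\hat R(H)$, i.e. coefficientwise with each coefficient eventually constant. The admissibility of $E(n)$ follows from Lemma \ref{lem:simple} applied to the $H\times K$-manifold $M\times\Xcal_P$ (its left $H$-moment map $p\circ\Phi^n_l$ is proper), combined with taking $K$-invariants — a sum of $H$-admissible modules indexed so that only finitely many contribute to each $H$-type. The other delicate point is justifying that one may apply Proposition \ref{prop:Q-X-P}, which is stated for the compact $\spinc$ index $\Qcal_{K\times K}$, inside the non-compact $\qfor$ via Theorem \ref{theo:hochs-song}: this requires that on the support of the localized index defining $\qfor_{H\times K}(M\times\Xcal_P,\cdots)$ the extra terms $\varphi_l,\varphi_r$ vanish, which is exactly what the cutoff condition \eqref{eq:varphi=0} and the choice of $c$ secure, so that the relevant part of $M\times\Xcal_P$ is genuinely a product over which Theorem \ref{theo:hochs-song} applies cleanly.
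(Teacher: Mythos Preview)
Your overall strategy matches the paper's: apply Theorem~\ref{theo:hochs-song} to factor off the compact $\Xcal_P$, plug in Proposition~\ref{prop:Q-X-P}, take $K$-invariants, and argue that the boundary terms vanish as $n\to\infty$. The formula
\[
E(n)=\sum_{\Ocal\cap\{nP^\circ+\rho\}\neq\emptyset} m_\Ocal\,\pi_\Ocal^K\big|_H
\;+\;\sum_{\Ocal\cap\{n\partial P+\rho\}\neq\emptyset} a_{n,\Ocal}\,m_\Ocal\,\pi_\Ocal^K\big|_H
\]
is exactly what the paper obtains.

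There is, however, a genuine gap in your convergence step. You argue that ``for each fixed $\Ocal$ the coefficient of $\pi_\Ocal^K|_H$ stabilizes'' and that the boundary orbits ``escape to infinity''. This is a statement about convergence in $\hat R(K)$, not in $\hat R(H)$. For a fixed $H$-type $\Ocal'$, infinitely many $K$-orbits $\Ocal$ can satisfy $[\pi_{\Ocal'}^H:\pi_\Ocal^K|_H]\neq 0$, so as $n$ grows the boundary $n\partial P+\rho$ could keep hitting new such $\Ocal$'s and the $\Ocal'$-multiplicity of $r(n)$ need not vanish. The paper closes this gap by invoking the properness of ${\rm p}\circ\Phi_\Scal$: if $m_\Ocal\neq 0$ then $\Ocal\subset K\Phi_\Scal(M)$, and if in addition $[\pi_{\Ocal'}^H:\pi_\Ocal^K|_H]\neq 0$ then $\Ocal'\subset{\rm p}(\Ocal)$, whence $\Ocal\subset K\Phi_\Scal\big(({\rm p}\circ\Phi_\Scal)^{-1}(\Ocal')\big)$, a \emph{compact} set. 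Only finitely many admissible $\Ocal$ lie there, so for $n$ large none of them meets $n\partial P+\rho$. This is the same finiteness mechanism as in the first bullet of Lemma~\ref{lem:simple}, but you need to invoke it here for the boundary term, not merely for admissibility of $E(n)$.

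Two smaller points. First, your attempt to get admissibility by ``Lemma~\ref{lem:simple} applied to $M\times\Xcal_P$ (its left $H$-moment map ${\rm p}\circ\Phi^n_l$ is proper)'' does not work as stated: ${\rm p}\circ\Phi^n_l$ depends only on the compact factor $\Xcal_P$, so it is certainly not proper on $M\times\Xcal_P$. What \emph{is} proper is the full $H\times K$-moment map $({\rm p}\circ\Phi^n_l,\Phi_\Scal+\Phi^n_r)$, and admissibility ultimately reduces back to the properness of ${\rm p}\circ\Phi_\Scal$ on $M$. Second, your worry about needing the cutoff condition~(\ref{eq:varphi=0}) to justify the use of Proposition~\ref{prop:Q-X-P} is unnecessary here: Theorem~\ref{theo:hochs-song} applies directly because $\Xcal_P$ is compact, and the choice of connection on $\det(\Scal_P^n)$ does not affect the index $\Qcal_{K\times K}(\Xcal_P,\Scal_P^n)$. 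The cutoff~(\ref{eq:varphi=0}) is used only in the \emph{second} computation (Proposition~\ref{prop:second-computation}), where one localizes on the zero level of the moment map.
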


\begin{proof}
We start by using the multiplicative property (Theorem \ref{theo:hochs-song}):
$$
\qfor_{H\times K}(M\times \Xcal_P,\Scal\boxtimes\Scal_P^n)= \qfor_{K}(M,\Scal_M)\otimes \Qcal_{H\times K}(\Xcal_P,\Scal_P^n).
$$
Thanks to Proposition \ref {prop:Q-X-P}, we know that $\Qcal_{H\times K}(\Xcal_P,\Scal_P^n)$ is equal to 
$$
\sum_{\Ocal\cap \{n P^\circ +\rho\}\neq\emptyset } \pi^K_\Ocal\vert_H\otimes (\pi^K_\Ocal)^* + R(n)
$$
with $R(n)= \sum_{\Ocal} a_{n,\Ocal}\, \pi^K_\Ocal\vert_H\otimes (\pi^K_\Ocal)^*$ where $a_{n,\Ocal}\neq 0$ only if $\Ocal\cap \{n \partial P +\rho\}\neq\emptyset$. 
So we see that 
$$
E(n)=  \sum_{\Ocal\cap \{n P^\circ +\rho\}\neq\emptyset  } m_\Ocal\,\pi^K_\Ocal\vert_H + r(n)
$$
with $ r(n)=\left[\qfor_{K}(M,\Scal)\otimes R(n)\right]^K$. It remains to check that $\lim_{n\to\infty} r(n)= 0$ in $\hat{R}(H)$.

If $\Ocal$ is a $K$-coadjoint orbit we denote $\|\Ocal\|$ the norm of any of its element. Note that there exists $d>0$ such that if 
$\Ocal\cap \{n \partial P +\rho\}\neq\emptyset$, then $\|\Ocal\|\geq n d$.

Let $\Ocal'$ be a regular admissible $H$-orbit. By definition the multiplicity of $\pi_{\Ocal'}^H$ in $r(n)$ decomposes as follows 
$$
[\pi_{\Ocal'}^H: r(n)]= \sum_{\Ocal} a_{n,\Ocal}\, m_\Ocal\,  [\pi_{\Ocal'}^H: \pi_\Ocal^K\vert_H].
$$

Suppose that $r(n)$ does not tends to $0$ in $\hat{R}(H)$: there exists a regular admissible $H$-orbit $\Ocal'$ such that the set 
$\{n\geq 1, [\pi_{\Ocal'}^H: r(n)]\neq 0\}$ is infinite. Hence there exists a sequence 
$(n_k, \Ocal_k)$ such that $\lim_{k\to \infty} n_k=\infty$ and $a_{n_k,\Ocal_k}\, m_{\Ocal_k}\,  [\pi_{\Ocal'}^H: \pi_{\Ocal_k}^K\vert_H]\neq 0$.

Thanks to the $[Q,R]=0$ property, we have 
\begin{equation}
\begin{cases}
1.\ \|\Ocal_k\|\geq d\, n_k, \\
2.\ \Ocal_k\in \Phi_\Scal(M),\\
3.\ \Ocal'\subset {\rm p}(\Ocal_k),
\end{cases}
\end{equation}
where ${\rm p}:\kgot^*\to \hgot^*$ is the projection. Points $2.$ and $3.$ give that 
$$
\Ocal_k\in K\Phi_\Scal\left(({\rm p}\circ \Phi_\Scal)^{-1}(\Ocal')\right).
$$
Since ${\rm p}\circ \Phi_\Scal$ is proper, we have only a finite number of $K$-admissible orbits contained in the compact set 
$K\Phi_\Scal\left(({\rm p}\circ \Phi_\Scal)^{-1}(\Ocal')\right)$. This is in contradiction with the first point. $\Box$

\end{proof}

%%%%%%%%%%%%%%%%%%%%%%%%%%%%%%%%%%
\subsection{Reduction in stage}\label{sec:stage}
%%%%%%%%%%%%%%%%%%%%%%%%%%%%%%%%%%

In this section, we explain the case of reduction in stages. Suppose that we have an action of the compact Lie group
$G\times K$ on the $\spinc$ manifold $(N,\Scal_N)$. Let $\Phi_{\Scal_N}=\Phi_{\Scal_N}^G\oplus\Phi_{\Scal_N}^K : N\to \ggot^*\oplus\kgot^*$ be  the
corresponding moment map associated to the choice of an invariant connection $\nabla$ on $\det(\Scal_N)$. We suppose that
\begin{itemize}
\item  $0$ is a regular value of $\Phi_{\Scal_N}^K$,
\item  $K$ acts freely on $Z:=(\Phi_{\Scal_N}^K)^{-1}(0)$,
\item the set $\Phi_{\Scal_N}^{-1}(0)$ is compact.
\end{itemize}

We denote by $\pi: Z\to N_0:=Z/K$ the corresponding $G$-equivariant principal fibration.

On $Z$, we obtain an exact sequence $0\longrightarrow \T Z\longrightarrow \T M\vert_Z \stackrel{\T\Phi_K}{\longrightarrow} [\kgot^*]\to 0$,
where $[\kgot^*]$ is the trivial bundle $Z\times\kgot^*$. We have also an orthogonal decomposition
$\T Z= \T_{K} Z \oplus [\kgot]$ where $[\kgot]$ is the sub-bundle identified to $Z\times\kgot$ through the map
$(p,X)\mapsto X\cdot p$. So $\T M\vert_Z$ admits the orthogonal decomposition
$\T N\vert_Z \simeq \T_{K} Z\oplus [\kgot] \oplus  [\kgot^*]$. We rewrite this as
\begin{equation}\label{eq:tangent-P}
\T N\vert_Z \simeq \T_{K} Z\oplus [\kgot_\C]
\end{equation}
with the convention $[\kgot]=Z\times(\kgot\otimes\R) $ and $[\kgot^*] = Z\times (\kgot\otimes i\R)$.
Note that the bundle $\T_{K} Z$ is naturally identified with $\pi^*(\T N_{0})$.

We can divide the $\spinc$-bundle $\Scal_N\vert_Z$ by the $\spinc$-bundle
$\bigwedge \kgot_\C$ for the vector space $\kgot_\C$ (see Section 2.2 in \cite{pep-vergne:witten}).

\begin{defi}\label{def:E-red}
Let $\Scal_{N_0}$ be the $\spinc$-bundle on $N_0$ such that
$$
\Scal_N\vert_Z \simeq \pi^*(\Scal_{N_0})\otimes [\bigwedge\kgot_\C]
$$
is an isomorphism of graded Clifford  bundles on $\T N\vert_Z$. 
\end{defi}

We see then that the line bundle $\det(\Scal_{N_0})$ is equal to  $\det(\Scal_{N})\vert_Z/K$. Hence the connection $\nabla$ on $\det(\Scal_N)$ 
induces a $G$-invariant connection $\nabla_0$ on $\det(\Scal_{N_0})$. The corresponding moment map $\Phi_{\Scal_{N_0}}:N_0\to \ggot^*$ is the equivariant map 
induced by $\Phi_{\Scal_N}^G: N\to \ggot^*$.

\begin{prop}\label{prop:localisation-stage}
We have the following relation
$$
\left[\Qcal_{G\times K}(N,\Scal_N,\{\Phi_{\Scal_N}=0\})\right]^K=
\Qcal_G(N_{0},\Scal_{N_0},\{\Phi_{\Scal_{N_0}}=0\}) \qquad \mathrm{in}\quad \hat{R}(G).
$$
\end{prop}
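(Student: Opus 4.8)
The plan is to reduce the statement to a local computation near the zero set $Z = (\Phi_{\Scal_N}^K)^{-1}(0)$, using the excision/multiplicativity properties of the transversally elliptic index together with the splitting of the $\spinc$-bundle in Definition \ref{def:E-red}. First I would observe that both sides are well-defined: on the left, $\{\Phi_{\Scal_N}=0\}$ is a compact component of the zero set of the Kirwan vector field $\kappa_{\Scal_N}$ for the $G\times K$-action, since $\Phi_{\Scal_N}^{-1}(0)$ is assumed compact; on the right, $\{\Phi_{\Scal_{N_0}}=0\}$ is the corresponding compact piece for the $G$-action on $N_0$, using that $\Phi_{\Scal_{N_0}}$ is induced by $\Phi_{\Scal_N}^G$ and that $\Phi_{\Scal_N}^{-1}(0) = Z \cap (\Phi_{\Scal_N}^G)^{-1}(0)$ maps onto $\Phi_{\Scal_{N_0}}^{-1}(0)$ via the free quotient $\pi: Z\to N_0$.

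The heart of the argument is to identify the localized index on the left as the index of a transversally elliptic symbol supported near $Z$, and then push it down through $\pi$. Since $0$ is a regular value of $\Phi_{\Scal_N}^K$ and $K$ acts freely on $Z$, a $K$-invariant neighbourhood $\Ucal$ of $Z$ in $N$ is $K$-equivariantly diffeomorphic to $Z\times_K (\kgot_\C)$ near the zero section, matching the decomposition (\ref{eq:tangent-P}) of $\T N\vert_Z$. Using the isomorphism $\Scal_N\vert_Z \simeq \pi^*(\Scal_{N_0})\otimes[\bigwedge\kgot_\C]$ of graded Clifford bundles, the shifted symbol $\sigma(N,\Scal_N,\Phi_{\Scal_N})$ restricted to $\Ucal$ factors, up to a homotopy of transversally elliptic symbols, as the exterior product of the pulled-back shifted symbol $\pi^*\sigma(N_0,\Scal_{N_0},\Phi_{\Scal_{N_0}})$ on the base with the standard Bott-type symbol of $\kgot_\C$ along the fibre (the latter being the symbol whose $K$-index is the trivial one-dimensional representation). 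Taking $K$-invariants then kills the fibre direction and returns exactly the localized $G$-index on $N_0$. This is the $\spinc$ analogue of the ``induction'' or ``reduction in stages'' computation; the argument parallels the localization carried out in \cite{pep-vergne:witten} and the free-quotient case of the $[Q,R]=0$ theorem.

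The main obstacle I expect is the careful matching of orientations, Clifford module structures, and the moment-map shift under the identification $\Ucal \simeq Z\times_K\kgot_\C$: one must check that the Kirwan vector field $\kappa_{\Scal_N}$ on $\Ucal$, after the diffeomorphism, is transverse-homotopic (through symbols with compact support on $\Ucal\cap Z_{\Scal_N}$) to the sum of the pulled-back Kirwan field for the $G$-action on $N_0$ and the radial vector field in the $\kgot_\C$ direction generated by $\Phi_{\Scal_N}^K$. Granting that homotopy, multiplicativity of the transversally elliptic index under products (with one factor a vector space carrying the linear $K$-action, whose shifted Bott symbol has $K$-invariant index equal to $1$) yields the claim. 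The remaining steps — that the connection $\nabla_0$ on $\det(\Scal_{N_0}) = \det(\Scal_N)\vert_Z/K$ induces precisely the moment map $\Phi_{\Scal_{N_0}}$, and that $\Scal_{N_0}$ is a genuine $\spinc$-bundle on $N_0$ — are exactly the content of Definition \ref{def:E-red} and the discussion preceding it, so they can be invoked directly.
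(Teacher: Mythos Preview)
Your proposal is correct and matches the approach the paper invokes: the paper's own proof is a one-line citation to Section 3.4.2 of \cite{pep-vergne:witten}, where precisely this reduction-in-stages computation (tubular neighbourhood of $Z$ modelled on $Z\times_K\kgot_\C$, factorization of the shifted symbol via the splitting $\Scal_N\vert_Z\simeq\pi^*(\Scal_{N_0})\otimes[\bigwedge\kgot_\C]$, and multiplicativity of the transversally elliptic index with the Bott symbol on the fibre) is carried out in the Hamiltonian setting. Your outline is in fact more explicit than what the paper itself provides.
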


\begin{proof}
The proof is done in Section 3.4.2 of \cite{pep-vergne:witten} in the Hamiltonian setting. The same proof works here. 
\end{proof}

\subsection{Second computation}

We consider $\qfor_H(M,\Scal)\in \hat{R}(H)$. 

\begin{prop}\label{prop:second-computation}
Suppose that the generic infinitesimal stabilizer of the $K$-action on $M$ is {\em abelian}. Let $\Ocal'$ be a regular admissible $H$-orbit. There exists $n_{\Ocal'}\geq 1$ such that 
$$
\left[\pi_{\Ocal'}^H : E(n)\right]=\left[\pi_{\Ocal'}^H :\qfor_H(M,\Scal)\right]
$$
when $n\geq n_{\Ocal'}$.
\end{prop}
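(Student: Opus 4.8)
The strategy is to compute the multiplicity $[\pi_{\Ocal'}^H : E(n)]$ by performing the $K$-reduction first (using the shifting trick on the $K$-factor and the localization formula \eqref{eq:Z-Ocal-0}, together with the reduction-in-stages Proposition \ref{prop:localisation-stage}) and then to identify the stabilized answer with $[\pi_{\Ocal'}^H : \qfor_H(M,\Scal)]$ via Remark \ref{rem:abelian-stabilizer}. Concretely, I would first apply the shifting trick for the $H$-action to $E(n)$: if $H\rho'$ is the regular admissible orbit with $\QS_H(H\rho')$ trivial and $\Ocal'^*=-\Ocal'$, then
$$
[\pi_{\Ocal'}^H : E(n)] = \left[\qfor_{H\times K}(M\times \Xcal_P\times {\Ocal'}^*,\Scal\boxtimes\Scal_P^n\boxtimes\Scal_{{\Ocal'}^*})\right]^{K\times H},
$$
where $H$ acts diagonally on the $M\times\Xcal_P$ factor (through the right $K$-factor) and on ${\Ocal'}^*$. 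The hypothesis that the generic infinitesimal stabilizer on $M$ is abelian, combined with the product structure, lets us use Remark \ref{rem:abelian-stabilizer}: the relevant localized index is computed on $\{\Phi=0\}$ for the total moment map.

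Next I would exploit the special structure of $\Xcal_P^\circ\simeq K\times\Ucal_P$ from point (5) of Section \ref{sec:Concini-Procesi}, together with the normalization \eqref{eq:varphi=0} of the connection on $\det((\T\Xcal_P)^{1,0})$, which makes $\varphi_l=\varphi_r=0$ on the region $\|\Phi_r\|\le c$. The key point is that the set where the total Kirwan vector field vanishes and the moment-map-zero locus live is contained, for the $K$-reduction at $0$, in the region $\Phi_r$ small (of size $O(1/n)$ after the $n$-rescaling), hence inside $\Xcal_P^\circ$ where everything trivializes. There one reads off that $K$ acts freely on the relevant zero set (for $n$ large, depending on $\Ocal'$, since the reduced space of $M\times{\Ocal'}^*$ at the appropriate orbit is compact and $0$ is eventually a regular value after the shift — this is where $n_{\Ocal'}$ enters), and the reduction $\Phi_l^{-1}(\Ucal_P)/K$-type argument identifies the $K$-reduced $\spinc$ manifold with $M\times{\Ocal'}^*$ (the $\Xcal_P$ factor reduces to a point under the left $K$-action, exactly as the reduced space $\Phi^{-1}(Ka\times -Ka)/K\times K$ is a point for $a\in P^\circ$ in Proposition \ref{prop:Q-X-P}). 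Applying Proposition \ref{prop:localisation-stage} with $G=H$ and $N=M\times\Xcal_P\times{\Ocal'}^*$ then yields
$$
[\pi_{\Ocal'}^H : E(n)] = \QS\big((M\times{\Ocal'}^*)_{\{0\}}\big) = \big[\pi_{\Ocal'}^H:\qfor_H(M,\Scal)\big]
$$
for $n\ge n_{\Ocal'}$, the last equality being Remark \ref{rem:abelian-stabilizer} applied to the $H$-manifold $M$.

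\textbf{The main obstacle.} The delicate step is making precise that for $n$ large (depending on $\Ocal'$) the localization of $E(n)$ on the full zero set $Z$ of the $K\times H$-Kirwan vector field reduces to the contribution of the component mapping near $\Phi_r\approx 0$, so that $\varphi_l,\varphi_r$ genuinely vanish there and the identification $\Xcal_P^\circ\simeq K\times\Ucal_P$ applies. This requires a properness/quantitative estimate: the other components of $Z$ involve $\Phi_r$ (equivalently, after rescaling, the relevant $K$-orbit in $\Xcal_P$) bounded away from $P^\circ$, hence by Proposition \ref{prop:Q-X-P}-type reasoning their $K$-invariant contribution to $[\pi_{\Ocal'}^H:\cdot]$ vanishes once $n$ exceeds a threshold determined by $\|\Ocal'\|$ and $\Phi_\Scal(M)$. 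One must also verify that $0$ is eventually a regular value of the $K$-moment map $\Phi_l^n$ restricted to the relevant locus and that $K$ acts (locally) freely there — this follows from point (3) (no multiplicities for $\Xcal_P$) and the fact that $a\in P^\circ$ gives smooth point reductions, but checking it survives the product with $M\times{\Ocal'}^*$ and the $1/n$-perturbation by $\Phi_\Scal$ is the technical heart of the argument, and is exactly where the constant $n_{\Ocal'}$ is produced.
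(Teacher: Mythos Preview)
Your overall strategy matches the paper's: shift, invoke Remark~\ref{rem:abelian-stabilizer} to localize at the zero of the total moment map, push into $\Xcal_P^\circ\simeq K\times\Ucal_P$, and apply reduction in stages (Proposition~\ref{prop:localisation-stage}). There are, however, two concrete gaps that prevent the argument from going through as written.

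\medskip

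\textbf{The shift must be by the full $H\times K$-orbit.} You shift only by the $H$-orbit ${\Ocal'}^*$ and then claim that Remark~\ref{rem:abelian-stabilizer} localizes $\left[\qfor_{H\times K}(M\times\Xcal_P\times{\Ocal'}^*)\right]^{H\times K}$ at $\{\Phi=0\}$. But Remark~\ref{rem:abelian-stabilizer} (which is equation~\eqref{eq:Z-Ocal-0} specialized to abelian stabilizers) computes the multiplicity of a \emph{regular admissible orbit} of the acting group: here the group is $H\times K$, and the representation whose multiplicity you want is $\pi_{\Ocal'}^H\otimes\mathrm{triv}_K=\pi^{H\times K}_{\tilde{\Ocal}}$ with $\tilde{\Ocal}=\Ocal'\times K\rho$. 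So the shift has to be by $\tilde{\Ocal}^*=({\Ocal'})^*\times(K\rho)^*$, and the localization takes place on $N=M\times\Xcal_P\times\tilde{\Ocal}^*$; this is exactly what the paper does. Without the $(K\rho)^*$ factor, your set ${\Ocal'}^*$ is not a regular admissible $H\times K$-orbit and Remark~\ref{rem:abelian-stabilizer} does not apply. The extra $(K\rho)^*$ is also what makes the $K$-reduction clean: on $\Xcal_P^\circ$ one gets $\{\Phi_K^n=0\}\simeq M\times\tilde{\Ocal}^*\times K$ with $K$ acting freely, so Proposition~\ref{prop:localisation-stage} applies directly.

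\medskip

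\textbf{The action and the ``main obstacle''.} In the paper's set-up $H$ acts on $\Xcal_P$ through the \emph{left} $K$-factor and does not act on $M$ before reduction; it is only \emph{after} reducing by $K$ that the residual $H$-action on $N_0\simeq M\times\tilde{\Ocal}^*$ becomes the diagonal one. Your description (``$H$ acts diagonally on $M\times\Xcal_P$ through the right $K$-factor'') is therefore off. Relatedly, the obstacle you identify --- killing the contribution of Kirwan components away from $\Phi_r\approx 0$ --- is not how the paper proceeds. The paper bypasses this entirely: once the correct shift by $\tilde\Ocal^*$ is in place, Remark~\ref{rem:abelian-stabilizer} already singles out $\{\Phi_{\Scal_N^n}=0\}$, and a short lemma (using properness of ${\rm p}\circ\Phi_\Scal$) shows that on this set $\|\Phi_r(x)\|\le R'/n$, forcing it inside $M\times\Xcal_P^\circ\times\tilde{\Ocal}^*$ for $n$ large. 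No separate vanishing argument for the other components is needed.
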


\begin{proof} First of all, since the $K$-action on $M$ has  generic abelian infinitesimal stabilizers, 
we see that the $H\times K$-action on $M\times \Xcal_P$ has also generic abelian infinitesimal stabilizers.

Let us denote $\tilde{\Ocal}$ the $H\times K$ regular admissible orbit $\Ocal'\times K\rho$. We work with the $H\times K$ manifold 
$$
N:= M\times \Xcal_P \times \tilde{\Ocal}^*
$$
which is equipped with the $\spinc$ bundles 
$\Scal_N^n:= \Scal \boxtimes\Scal_P^n\boxtimes \Scal_{\tilde{\Ocal}^*}$. The moment map associated to the action of 
$H\times K$ on $\det(\Scal_N^n)$ is $\Phi_{\Scal_N^n}=(\Phi^n_H,\Phi^n_K)$ where 
$$
\Phi^n_H(m,x,\eta,\xi)={\rm p}\left(n\Phi_l(x)+\varphi_l(x)\right)+\eta,
$$
and
$$
\Phi^n_K(m,x,\eta,\xi)=\Phi_\Scal(m)+n\Phi_r(x)+ \varphi_r(x)+\xi
$$
for $(m,x,\eta,\xi)\in M\times \Xcal_P\times (\Ocal')^*\times (K\rho)^*$.

Thanks to the multiplicative property we have
$$
\left[\pi_{\Ocal'}^H : E(n)\right]= \left[\qfor_{H\times K}(N,\Scal_N^n)\right]^{H\times K}.
$$
Using the fact that the $H\times K$-action on $M\times \Xcal_P$ has generic abelian infinitesimal stabilizers, we know that 
\begin{equation}\label{eq:multiplicity-localized}
\left[\pi_{\Ocal'}^H : E(n)\right]= \left[\Qcal_{H\times K}(N,\Scal_N^n,\{\Phi_{\Scal_N^n}=0\})\right]^{H\times K}.
\end{equation}
See Remark \ref{rem:abelian-stabilizer}. Now we are going to compute the right hand side of (\ref{eq:multiplicity-localized}) 
by using the reduction in stage (see Section \ref{sec:stage}).

We start with the

\begin{lem}
There exists $R,R'>0$, independent of $n$, such that 
if \break $(m,x,\eta,\xi)\in \{\Phi_{\Scal_N^n}=0\}$ then
$\|\Phi_\Scal(m)\|\leq R$ and $ \|\Phi_r(x)\|\leq R'/n$.
\end{lem}
\begin{proof} 
Let $(m,x,\eta,\xi)\in \{\Phi_{\Scal_N^n}=0\}$. We have ${\rm p}\left(n\Phi_l(x)+\varphi_l(x)\right)+\eta=0$ and 
$\Phi_\Scal(m)+n\Phi_r(x)+ \varphi_r(x)+\xi=0$. Let $k\in K$ such that $k\Phi_r(x)=-\Phi_l(x)$ 
(see Point $(2)$ in Section \ref{sec:Concini-Procesi}). We get then 
${\rm p}(\Phi_\Scal(km))+ {\rm p}(k\varphi_r(x)+\varphi_l(x)+\xi)+\eta=0$. The term 
${\rm p}(k\varphi_r(x)+\varphi_l(x)+\xi)+\eta$ is bounded, and since ${\rm p}\circ\Phi_\Scal$ is proper, the variable $m$ belongs to a compact 
of $M$ (independent of $n$). Finally the identity $\Phi_\Scal(m)+n\Phi_r(x)+ \varphi_r(x)+\xi=0$ shows that $n\Phi_r(x)$ is bounded by a quantity independent of $n$. $\Box$
\end{proof}

\medskip

So, if $n$ is large enough, the set $\{\Phi_{\Scal_N^n}=0\}$ is contained in the open subset $M\times  \Xcal_P^o\times\tilde{\Ocal}^*\subset N$ that we can identify with
$$
\tilde{N}= M \times K\times \Ucal_P \times \tilde{\Ocal}^*
$$
through the diffeomorphism $\Upsilon : K\times \Ucal_P\to \Xcal_P^o$ (see Point $(4)$ in Section \ref{sec:Concini-Procesi}). 
Moreover, thanks to (\ref{eq:varphi=0}), for $n$ large enough an element
$(m,g,\nu,\eta,\xi)\in \tilde{N}$ belongs to $\{\Phi_{\Scal_N^n}=0\}$ if and only if 
\begin{equation}
\begin{cases}
n{\rm p}(g\nu)+\eta=0,\\
\Phi_\Scal(m) -n\nu+\xi=0.
\end{cases}
\end{equation}

We use now the reduction in stage for $n$ large enough. The map 
$$
(m,\eta,\xi,g)\mapsto (m,g,\frac{\Phi_\Scal(m)+\xi}{n},\eta)
$$ 
defines a diffeomorphism between  $M\times \tilde{\Ocal}^*\times K$ and the sub-manifold \break 
$Z:=\{\Phi_K^n=0\}\subset \tilde{N}$ and induces a diffeomorphism
$$
\Psi: M\times \tilde{\Ocal}^* \stackrel{\sim}{\longrightarrow} N_0= Z/K.
$$
Through the diffeomorphism $\Psi$, the $H$-action on $N_0= Z/K$ corresponds to the induced action of the subgroup 
$H\simeq \{(h,h), h\in H\}\subset H\times K$ on $M\times \tilde{\Ocal}^*$.
Through the diffeomorphism $\Psi$, the moment map $\Phi_{\Scal^n_{N_0}}: N_0\to \hgot^*$ becomes
$$
\Phi_{\tilde{\Ocal}}(m,\xi,\eta)={\rm p}(\Phi_\Scal(m)+\xi)+\eta, 
$$
for $(m,\xi,\eta)\in M\times \tilde{\Ocal}^*$.

\begin{lem}\label{lem:diffeo-psi}
Through the diffeomorphism $\Psi$, the induced $\spinc$ bundle $\Scal^n_{N_0}$ corresponds to $\Scal_M\boxtimes \Scal_{ \tilde{\Ocal}^*}$.
\end{lem}

\begin{proof}
We consider the restriction of the $\spinc$ bundle $\Scal_P^n$ to the open subset $\Xcal^\circ_P$. Let 
$\Scal^n:=\Upsilon^{-1}(\Scal_P^n\vert_{\Xcal^\circ_P})$ be the corresponding $K\times K$-equivariant $\spinc$ bundle on 
$K\times\Ucal_P$. It must be of the form $\Scal^n\simeq F\times \bigwedge\kgot_\C\times K\times\Ucal_P$ where $F$ is a character of 
$K\times K$. If we look at the value of $\Scal_P^n$ at the point $\Upsilon(1,0)\in\Xcal_P$, we see that $F$ is trivial. The Lemma follows. 
$\Box$
\end{proof}

\medskip

Finally, for $n$ large enough, we get
\begin{eqnarray*}
\left[\pi_{\Ocal'}^H : E(n)\right]&=& \left[\Qcal_{H\times K}(N,\Scal_N^n,\{\Phi_{\Scal_N^n}=0\})\right]^{H\times K}
\hspace{13mm} [1]\\
&=&\left[\Qcal_{H}(N_0,\Scal_{N_0}^n,\{\Phi_{\Scal^n_{N_0}}=0\})\right]^{H}\hspace{19mm} [2]\\
&=&\left[\Qcal_{H}(M\times\tilde{\Ocal}^*,\Scal\boxtimes\Scal_{\tilde{\Ocal}^*},\{\Phi_{\tilde{\Ocal}}=0\})\right]^{H} \hspace{5mm} [3]\\
&=& \left[\qfor_{H}(M\times\tilde{\Ocal}^*,\Scal\boxtimes\Scal_{\tilde{\Ocal}^*})\right]^{H}\hspace{21mm} [4]\\
&=& \left[\qfor_{H}(M,\Scal)\otimes\Qcal_H(\tilde{\Ocal}^*,\Scal_{\tilde{\Ocal}^*})\right]^H\hspace{15mm} [5]\\
&=& \left[\pi_{\Ocal'}^H : \qfor_{H}(M,\Scal)\right].\hspace{36mm} [6]
\end{eqnarray*}

First we see that $[1]$ corresponds to (\ref{eq:multiplicity-localized}). 
Equality $[2]$ is the reduction in stage (see Proposition \ref{prop:localisation-stage}) and Equality $[3]$ is a 
consequence of the diffeomorphism $\Psi$ (see Lemma \ref{lem:diffeo-psi}). Equality $[4]$ follows from the fact that $M$ has 
abelian generic infinitesimal stabilizers (see Remark \ref{rem:abelian-stabilizer}). 
Equality $[5]$ is a consequence of the multiplicative property. Equality $[6]$ follows from the identity 
$\Qcal_H(\tilde{\Ocal}^*,\Scal_{\tilde{\Ocal}^*})=(\pi_{\Ocal'}^H)^*$.

The proof of Proposition \ref{prop:second-computation} is completed. $\Box$
\end{proof}

\bigskip

We can now conclude our exposition.  Proposition \ref{prop:first-computation} tell us that \break 
$\lim_{n\to\infty} E(n)=\qfor_{K}(M,\Scal)\vert_H$
while Proposition \ref{prop:second-computation} says that $\lim_{n\to\infty} E(n)$ $=\qfor_{H}(M,\Scal)$ 
when the manifold $M$ has abelian generic infinitesimal stabilizers. 
So we have proved Theorem \ref{theo-principal} for manifolds with abelian generic infinitesimal stabilizers. 
But we have checked in Lemma \ref{lem:simple} that 
it is sufficient to get the proof of Theorem \ref{theo-principal} in the general case.

%%%%%%%%%%%%%%%%%%%%%%%%%%%%%%%%%%%%%%%%%%%%%%%%%%%%%%%%%%%%%%%%%%%%%%%%%%

{\small

}

\end{document}